\documentclass[5p,times]{elsarticle}

\usepackage{amsmath,amssymb,amsthm}
\usepackage{booktabs}
\usepackage{graphicx}
\usepackage[hypertexnames=false,colorlinks=true,breaklinks=true,bookmarks=false,urlcolor=blue,citecolor=blue,linkcolor=blue,bookmarksopen=false,draft=false]{hyperref}

\makeatletter\renewcommand*{\top}{%
{\mathpalette\@transpose{}}%
}
\newcommand*{\@transpose}[2]{%
\scriptsize
\raisebox{\depth}{$\m@th#1\mathsf{T}$}%
}
\makeatother

\newtheorem{theorem}{Theorem}
\newtheorem{lemma}[theorem]{Lemma}
\newtheorem{corollary}[theorem]{Corollary}
\newtheorem{proposition}[theorem]{Proposition}
\theoremstyle{remark}
\newtheorem{remark}[theorem]{Remark}

\newcommand{\mleft}{\mathopen{}\mathclose\bgroup\left}
\newcommand{\mright}{\aftergroup\egroup\right}
\newcommand{\R}{\mathbb{R}}
\newcommand{\Vol}{\operatorname{Vol}}
\newcommand{\one}{\mathbf{1}}
\newcommand{\E}{\mathbb{E}}
\newcommand{\Unif}{\operatorname{Unif}}
\newcommand{\VP}{V_{\mathcal{P}}}
\newcommand{\VQ}{V_{\mathcal{Q}}}

\journal{Operations Research Letters}

\begin{document}

\begin{frontmatter}

\title{Volumes of the strong and weak relaxations of the uncapacitated facility-location polytope}

\author{Jon Lee\fnref{fund}}
\ead{jonxlee@umich.edu}
\affiliation{University of Michigan}
\fntext[fund]{J.~Lee was supported in part by ONR grant N00014-24-1-2694.}

\begin{abstract}
Lee, Skipper, and Speakman (2018) asked for formulae or asymptotic expressions for the
volumes of the strong and weak relaxations $\mathcal{P}_{m,n}\cap \mathcal{D}_{m,n}$ and
$\mathcal{Q}_{m,n}\cap \mathcal{D}_{m,n}$ of the uncapacitated facility-location polytope with $m$
facilities and $n$ customers (their Problem~3, ``Texas Hot''). We give exact
formulae for all $m$ and $n$, and we give asymptotics that hold uniformly as
$m+n\to\infty$, whatever the relative growth of $m$ and $n$. The volume ratio
satisfies
$\Vol(\mathcal{Q}_{m,n}\cap \mathcal{D}_{m,n})/\Vol(\mathcal{P}_{m,n}\cap \mathcal{D}_{m,n})\sim
\prod_{j=2}^{n}\mleft(1+\tfrac{1}{(m-1)j}\mright)^{m}$. From this, we read off
the behavior in every growth regime of $m$, and we identify a sharp transition
at $m\asymp\ln n$. Below it, in a uniformly random point of the strong
relaxation, each facility variable is close to $1$ with high probability, and
the ratio grows superlinearly in $n$. Above it, in a uniformly random point of
either relaxation, each facility variable is asymptotically uniform on
$[0,1]$, and the ratio grows linearly in $n$.
\end{abstract}

\begin{keyword}
Polytope volume \sep fixed charge \sep facility location \sep relaxation \sep asymptotics \sep mixed-integer nonlinear optimization
\MSC[2010] 52B11 \sep 52B12 \sep 90C10 \sep 90C11
\end{keyword}

\end{frontmatter}

\section{Introduction}
Let $M:=\{1,\dots,m\}$ and $N:=\{1,\dots,n\}$ with $m\ge 2$. Following~\cite{LSS}, we let $\mathcal{P}_{m,n}$ be the set of $(x,y)\in\R^{m\times n}\times\R^m$
satisfying
\[
 0\leq x_{ij}\le y_i\,,\quad \forall (i,j)\in M\times N;
  \qquad y_i\le 1\quad\forall i\in M,
\]
and we let $\mathcal{Q}_{m,n}$ be the set of $(x,y)$ satisfying
\begin{gather*}
  \sum_{j\in N}x_{ij}\le n y_i\,,\quad\forall i\in M;\\
  0\le x_{ij}\le 1,\quad\forall (i,j)\in M\times N;\qquad y_i\le 1,\quad\forall i\in M.
\end{gather*}
These are the strong and weak formulations of fixed-charge constraints; Lee
and Morris~\cite{LM} compute their full-dimensional volumes. Imposing the
demand constraints
\[
  \mathcal{D}_{m,n}:=\mleft\{(x,y):\ \sum_{i\in M}x_{ij}=1\ \ \forall j\in N\mright\}
\]
yields relaxations of the classical \emph{uncapacitated
facility-lo\-cation polytope}, which is the convex hull of the solutions (to
either relaxation) having $y\in\{0,1\}^m$.

\medskip
\noindent\textbf{Problem~3 (``Texas Hot'') of~\cite{LSS}; also see~\cite[Section~4]{LM}.}
\emph{Give formulae or asymptotic expressions for the volumes of
$\mathcal{Q}_{m,n}\cap \mathcal{D}_{m,n}$ and $\mathcal{P}_{m,n}\cap \mathcal{D}_{m,n}\,$.}
\medskip

Lee and Morris~\cite{LM} treat the case $m=2$. Here, we handle general $m$ and
$n$.

\subsection*{Motivation}

It is well-known folklore, and easily verified, that  computationally, the weak formulation is much weaker than the strong formulation, which in turn is not much weaker than the convex hull.
 \cite{L07} demonstrated that the computational behavior of these two relaxations can critically depend on the type of objective function that one might be seeking to optimize on these polytopes.
 \cite{LM} proposed and made some headway on comparing these polytopes volumetrically, motivated by the setting on mixed-integer nonlinear optimization.
 In particular, they analyzed the general situation without the demand constraints, and also with the demand constraints but only for $m=2$, establishing the asymptotic behavior in various regimes on $m$ and $n$. In what follows, we make a full volumetric analysis, including the demand constraints, for all $m$ and $n$.

\subsection*{Notation}
We write $\R$ for the real numbers, $\mathbb{Z}$ for the integers, and
$\R^m_{\ge0}\,$ for the nonnegative orthant. For a positive integer $t$, we let
$[t]:=\{1,\dots,t\}$. We use $\subset$ for any
subset, not necessarily proper, and we write $|S|$ for the cardinality of a
finite set $S$. For $a\in\R$, we let $a_+:=\max\{a,0\}$. We write $\ln$ for
the natural logarithm and $\one\{A\}$ for the indicator of a condition $A$.
For $x\in\R^{m\times n}$ and $j\in N$, we write $x_{\cdot j}:=(x_{ij})_{i\in M}$
for the $j$th column of $x$. For $z\in\R^m$ and $\alpha\in\mathbb{Z}^m_{\ge0}\,$,
we let $z^\alpha:=\prod_i z_i^{\alpha_i}$ and $|\alpha|:=\sum_i\alpha_i\,$. We
write $\Gamma$ for Euler's gamma function, $B(a,b):=\Gamma(a)\Gamma(b)/\Gamma(a+b)$
for the beta function, $\gamma\approx0.5772$ for Euler's constant, and
$H_n:=\sum_{j=1}^n 1/j$ for the $n$th harmonic number. For a power series
$F(z)=\sum_{t\ge0}c_tz^t$, we write $[z^t]F(z):=c_t$ for the coefficient of
$z^t$. For positive quantities $a$ and $b$ that depend on $m$ and $n$, we write
$a\sim b$ if $a/b\to1$, $a\ll b$ if $a/b\to0$, and $a\asymp b$ if $a/b$ is
bounded away from $0$ and $\infty$; the symbols $O(\cdot)$ and $o(\cdot)$ have
their usual meanings. Finally, we write $\Vol_d$ for $d$-dimensional Lebesgue
measure.

\subsection*{Normalization}
Both polytopes lie in the affine subspace $\mathcal{D}_{m,n}$ of dimension
\[
  d:=(m-1)n+m .
\]
Let $\pi$ be the coordinate projection that deletes the variables $x_{mj}\,$,
$j\in N$. Its restriction to $\mathcal{D}_{m,n}$ is an affine bijection onto $\R^d$ that
maps $(\mathbb{Z}^{m\times m}\times\mathbb{Z}^m)\cap \mathcal{D}_{m,n}$ onto
$\mathbb{Z}^d$. We define $\Vol(K):=\Vol_d(\pi(K))$ for $K\subset \mathcal{D}_{m,n}\,$.
This is the lattice-normalized relative volume; the Euclidean relative volume
equals $m^{n/2}\Vol(K)$. We abbreviate
\begin{gather*}
  \VP:=\Vol(\mathcal{P}_{m,n}\cap \mathcal{D}_{m,n}),\qquad \VQ:=\Vol(\mathcal{Q}_{m,n}\cap \mathcal{D}_{m,n}),\\
  R:=\VQ/\VP .
\end{gather*}

Let $\Delta:=\mleft\{z\in\R^m_{\ge 0}:\sum_i z_i=1\mright\}$. We take integrals over
$\Delta$ with respect to the measure that we obtain by projecting away the last
coordinate, so that $\Vol(\Delta)=1/(m-1)!$, and
\begin{equation}\label{eq:dirichlet}
  \int_\Delta z^\alpha\,dz=\frac{\prod_{i}\alpha_i!}{(m-1+|\alpha|)!}
  \qquad(\alpha\in\mathbb{Z}^m_{\ge 0}).
\end{equation}
This is the classical Dirichlet integral; equivalently, it gives the
normalizing constant of the Dirichlet distribution (see, e.g.,~\cite[Chapter~49]{KBJ}). Throughout, we set
\[
  k:=1/(m-1).
\]

\subsection*{Exact formulae}
For $y\in[0,1]^m$, we define
\begin{equation}\label{eq:f}
  f_m(y):=\frac{1}{(m-1)!}\sum_{S\subset M}(-1)^{|S|}
  \mleft(1-\sum_{i\in S}y_i\mright)_+^{m-1},
\end{equation}
and we define the power series
\begin{equation}\label{eq:g}
  g_m(z):=\sum_{t\ge 0}\frac{z^t}{t!\,(m-1+t)!}
  = z^{-(m-1)/2}\,I_{m-1}\mleft(2\sqrt{z}\mright),
\end{equation}
where $I_\nu$ is the modified Bessel function of the first kind.

\begin{theorem}[Strong relaxation]\label{thm:P}
For all $m\ge 2$ and $n\ge 1$,
\[
  \VP=\int_{[0,1]^m} f_m(y)^n\,dy .
\]
In particular, $\Vol(\mathcal{P}_{2,n}\cap \mathcal{D}_{2,n})=\dfrac{1}{(n+1)(n+2)}$.
\end{theorem}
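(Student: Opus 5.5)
We will compute $\VP$ by Fubini, integrating first over $x$ with $y$ held fixed. After applying $\pi$, a point of $\pi(\mathcal{P}_{m,n}\cap\mathcal{D}_{m,n})$ consists of $y\in[0,1]^m$ together with the truncated columns $(x_{1j},\dots,x_{m-1,j})$, $j\in N$. For fixed $y$, the constraints $0\le x_{ij}\le y_i$ and $\sum_i x_{ij}=1$ do not link different columns. The fiber over $y$ is therefore a product of $n$ identical sets. Each factor is
\[
A(y):=\{z\in\Delta:\ z_i\le y_i\ \forall i\in M\},
\]
measured in the projected measure of the Normalization subsection. The projection has unit Jacobian, so
\[
\VP=\int_{[0,1]^m}\Vol(A(y))^n\,dy .
\]
The whole proof then reduces to the identity $\Vol(A(y))=f_m(y)$ for $y\in[0,1]^m$.

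To prove this identity we use inclusion--exclusion on the violated constraints. For $S\subset M$, let $A_S$ be the set of $z\in\Delta$ with $z_i\ge y_i$ for all $i\in S$. Then
\[
\Vol(A(y))=\sum_{S\subset M}(-1)^{|S|}\Vol(A_S);
\]
the boundary sets where $z_i=y_i$ have measure zero. Substitute $z_i=y_i+w_i$ for $i\in S$ and $z_i=w_i$ otherwise. This is a translation, so it preserves the projected measure, and it maps $A_S$ onto the scaled simplex $\{w\ge0:\sum_i w_i=1-\sum_{i\in S}y_i\}$. That set is empty when $1-\sum_{i\in S}y_i<0$. Otherwise it is a dilate of $\Delta$ by $s=1-\sum_{i\in S}y_i$, and its projection is an $(m-1)$-dimensional simplex scaled by $s$, of volume $s^{m-1}/(m-1)!$. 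Summing over $S$ gives exactly $f_m(y)$ as defined in \eqref{eq:f}.

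The step that needs the most care is bookkeeping rather than a real obstacle. We must check that the projection $\pi$, which deletes $x_{mj}$, matches the measure on $\Delta$ that projects away the last coordinate. We must also handle the case $m\in S$ correctly, where the deleted coordinate is itself the one being shifted. This causes no trouble, since translations in $\R^m$ restricted to the hyperplane $\sum_i z_i=\text{const}$ project to translations in $\R^{m-1}$. It is also worth noting that the constraint $y_i\ge0$ is implied automatically, because $x_{ij}\le y_i$ and some $x_{ij}>0$ force positivity on a full-measure set. So integrating over $[0,1]^m$ is legitimate, and $f_m$ vanishes where $A(y)$ is empty.

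For $m=2$, take $y=(a,b)$. Then $f_2(y)=1-(1-a)_+-(1-b)_++(1-a-b)_+=(a+b-1)_+$, which is the length of the admissible segment for $x_{1j}$. Hence
\[
\VP=\int_{[0,1]^2}(a+b-1)_+^n\,da\,db .
\]
The region where $a+b\ge1$ is a triangle, and on it $u=a+b-1\in[0,1]$ with marginal density $1-u$. This gives
\[
\int_0^1 u^n(1-u)\,du=\frac{1}{n+1}-\frac{1}{n+2}=\frac{1}{(n+1)(n+2)} .
\]
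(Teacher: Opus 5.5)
Your proposal is correct and follows the paper's proof: Fubini over $y$ with the columns decoupling, the identity $\Vol_{m-1}(A(y))=f_m(y)$ by inclusion--exclusion, and the density $1-u$ of $y_1+y_2-1$ for $m=2$. The only difference is in proving that identity: you apply inclusion--exclusion directly on the $(m-1)$-dimensional slice and translate each piece to a dilate of $\Delta$, whereas the paper applies it to the full-dimensional body $\{0\le z\le y,\ \sum_i z_i\le t\}$ and recovers the slice volume as $F'(1)$, so your version is slightly more direct and avoids the differentiability step.
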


\begin{theorem}[Weak relaxation]\label{thm:Q}
For all $m\ge 2$ and $n\ge 1$,
\[
  \VQ=\sum_{t=0}^{m}\binom{m}{t}
  \mleft(-\frac1n\mright)^{t}\,t!\,[z^{t}]\,g_m(z)^n .
\]
In particular,
\begin{gather*}
  \Vol(\mathcal{Q}_{2,n}\cap \mathcal{D}_{2,n})=\frac{3n-1}{12n},\\
  \Vol(\mathcal{Q}_{3,n}\cap \mathcal{D}_{3,n})=\frac{80n^2-15n-2}{270\cdot 2^n\,n^2}.
\end{gather*}
Moreover, $((m-1)!)^{n}\VQ$ is a polynomial in $1/n$ of degree at most $m$,
with constant term $(1-1/m)^m$.
\end{theorem}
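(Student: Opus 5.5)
The plan is to describe the polytope column by column and integrate out $y$ first. A point of $\mathcal{Q}_{m,n}\cap\mathcal{D}_{m,n}$ has each column $x_{\cdot j}$ in $\Delta$: the constraints $x_{ij}\ge0$ and $\sum_i x_{ij}=1$ hold, and $x_{ij}\le1$ then follows. Given $x$, the constraints on $y$ decouple into $s_i/n\le y_i\le 1$, where $s_i:=\sum_{j\in N}x_{ij}$. Note that $s_i\le n$ holds automatically. Since $\pi$ deletes only the $x_{mj}$, the measure on each column is the measure on $\Delta$ fixed above. Hence
\[
\VQ=\int_{\Delta^n}\prod_{i\in M}\mleft(1-\tfrac{s_i}{n}\mright)\,dx
=\sum_{T\subset M}\mleft(-\tfrac1n\mright)^{|T|}\int_{\Delta^n}\prod_{i\in T}s_i\,dx .
\]
It therefore suffices to show that $\int_{\Delta^n}\prod_{i\in T}s_i\,dx=t!\,[z^t]\,g_m(z)^n$ whenever $|T|=t$. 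Summing over the $\binom mt$ sets $T$ of size $t$ then gives the formula.

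For this I would use the moment generating function $\int_{\Delta^n}e^{\sum_i u_is_i}\,dx=G(u)^n$, where $G(u):=\int_\Delta e^{u\cdot z}\,dz$. This factorization holds because $\sum_i u_is_i=\sum_j u\cdot x_{\cdot j}$ and the columns are integrated independently. Expanding the exponential and applying \eqref{eq:dirichlet} gives
\[
G(u)=\sum_{\alpha}\frac{u^\alpha}{\alpha!}\cdot\frac{\prod_i\alpha_i!}{(m-1+|\alpha|)!}=\sum_\alpha\frac{u^\alpha}{(m-1+|\alpha|)!}.
\]
The integral $\int\prod_{i\in T}s_i$ is the coefficient of the squarefree monomial $u_T:=\prod_{i\in T}u_i$ in $G(u)^n$, because the factorial factor $\alpha!$ equals $1$ for such monomials.

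The main step is to extract that multilinear coefficient. I would do this by working modulo the ideal generated by $u_i^2$, $i\in M$, which does not affect squarefree coefficients. In this quotient, $G\equiv\sum_{S}u_S/(m-1+|S|)!=\sum_s e_s/(m-1+s)!$, where $e_s$ is the $s$th elementary symmetric polynomial. Moreover $e_1^s=s!\,e_s$ in the quotient. Hence $G\equiv g_m(e_1)$, and so $G^n\equiv g_m(e_1)^n=\sum_t\big([z^t]g_m^n\big)\,t!\,e_t$. The coefficient of $u_T$ is therefore $t!\,[z^t]g_m(z)^n$, as required.

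The special cases follow by direct expansion. For $m=2$ we have $g_2=1+z/2+z^2/12+\cdots$, and for $m=3$ we have $g_3=\tfrac12(1+z/3+z^2/24+z^3/360+\cdots)$. Only the coefficients up to $z^m$ of $g_m^n$ are needed, and these are polynomials in $n$.

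For the final claim, write $(m-1)!\,g_m(z)=1+\sum_{t\ge1}\frac{(m-1)!}{t!(m-1+t)!}z^t$. The coefficient of $z^t$ in its $n$th power is a polynomial in $n$ of degree $t$, with leading coefficient $(1/m)^t/t!$ coming from the term $(z/m)^t\binom nt$. After multiplying by $n^{-t}$, each term is a polynomial in $1/n$ of degree at most $t\le m$, with constant term $\binom mt(-1)^t t!\,(1/m)^t/t!$. Summing over $t$ gives the constant term $(1-1/m)^m$.
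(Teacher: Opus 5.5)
Your proof is correct and follows the paper's route: integrate out $y$ to get $\int_{\Delta^n}\prod_i(1-s_i/n)\,dx$, expand over subsets $T\subset M$, and evaluate the row-sum moments $\int_{\Delta^n}\prod_{i\in T}s_i\,dx$ using column independence and the Dirichlet integral; polynomiality and the constant term $(1-1/m)^m$ then come from the leading term $\binom{n}{t}(z/m)^t$. The only difference is bookkeeping in the moment step, and it is not a substantive one. You read the moment off as the $u_T$-coefficient of $G(u)^n$, using $e_1^s\equiv s!\,e_s$ modulo $(u_i^2)$, whereas the paper expands over maps $\varphi:T\to N$ and counts the $t!/\prod_j a_j!$ maps with fiber sizes $(a_j)$, which is the same count. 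The $m=2,3$ values that you left to ``direct expansion'' do come out as stated.
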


\begin{theorem}[Weak relaxation, sign-free form]\label{thm:Qpos}
For $t\in[m]$, let
\[
  W_t:=\sum_{\sigma}\,\prod_{g\in[m]}|\sigma^{-1}(g)|!\,,
\]
where the sum is over all maps $\sigma:[t]\to[m]$ with $\sigma(i)\ne i$ for
all $i\in[t]$, and let $p_t:=(m-1)!\,W_t/(m-1+t)!$. Then, for all $m\ge2$ and
$n\ge1$,
\[
  ((m-1)!)^n\VQ=\sum_{\pi}\frac{(n)_{|\pi|}}{n^m}\prod_{B\in\pi}p_{|B|}\,,
\]
where the sum is over all partitions $\pi$ of $[m]$ into nonempty blocks,
$|\pi|$ is the number of blocks of $\pi$, and
$(n)_\ell:=n(n-1)\cdots(n-\ell+1)$. In particular, $((m-1)!)^n\VQ$ is a
polynomial in $1/n$ of degree at most $m-1$.
\end{theorem}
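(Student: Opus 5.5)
The plan is to go back to the defining integral of $\VQ$ and expand the $y$-factor as a product of positive sums rather than by inclusion–exclusion. This avoids the alternating signs of Theorem~\ref{thm:Q}.

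\textbf{Step 1: integrate out $y$.} On $\mathcal{D}_{m,n}$ the constraints $0\le x_{ij}$ together with $\sum_i x_{ij}=1$ force each column $x_{\cdot j}$ to lie in $\Delta$, and then $x_{ij}\le 1$ holds automatically. Write $r_i:=\sum_{j}x_{ij}$. For fixed $x$, the admissible $y_i$ form the interval $[r_i/n,1]$. Its lower end is nonnegative, and it is nonempty because $r_i\le n$. Hence, in the normalization fixed above,
\[
\VQ=\int_{\Delta^n}\prod_{i\in M}\Bigl(1-\frac{r_i}{n}\Bigr)\,dx .
\]
Now take each column to be i.i.d.\ uniform on $\Delta$; the uniform density is $(m-1)!$ by \eqref{eq:dirichlet}. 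Then $((m-1)!)^n\VQ=\E\prod_i(1-r_i/n)$.

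\textbf{Step 2: sign-free expansion.} Since $\sum_j 1=n$, we have $1-r_i/n=\frac1n\sum_{j\in N}(1-x_{ij})$. Expanding the product over $i$ gives
\[
((m-1)!)^n\VQ=\frac1{n^m}\sum_{\phi:[m]\to N}\E\prod_{i\in[m]}\bigl(1-x_{i,\phi(i)}\bigr).
\]
Group the maps $\phi$ by the partition $\pi$ of $[m]$ into the nonempty fibers of $\phi$. Exactly $(n)_{|\pi|}$ maps induce a given $\pi$, since the blocks are sent injectively to columns. Distinct blocks then involve distinct, independent columns, so the expectation factors as $\prod_{B\in\pi}\E\prod_{i\in B}(1-z_i)$ with $z$ uniform on $\Delta$. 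By exchangeability of the coordinates of $z$, each factor depends only on $t=|B|$, so it suffices to take $B=[t]$.

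\textbf{Step 3: identify the block factor with $p_t$.} Write $1-z_i=\sum_{g\ne i}z_g$ on $\Delta$. Then $\prod_{i\in[t]}(1-z_i)=\sum_\sigma z^{\alpha(\sigma)}$, where $\sigma$ ranges over maps $[t]\to[m]$ with $\sigma(i)\ne i$ and $\alpha(\sigma)_g=|\sigma^{-1}(g)|$, so that $|\alpha|=t$. Applying \eqref{eq:dirichlet} and multiplying by the density $(m-1)!$ gives the expectation
\[
\sum_\sigma (m-1)!\,\frac{\prod_g|\sigma^{-1}(g)|!}{(m-1+t)!}=p_t .
\]
This yields the displayed formula.

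\textbf{Step 4: degree bound.} For each partition, $|\pi|=\ell\ge1$ and
\[
\frac{(n)_\ell}{n^m}=n^{-(m-\ell)}\prod_{s=0}^{\ell-1}\Bigl(1-\frac sn\Bigr),
\]
which is a polynomial in $1/n$ of degree $(m-\ell)+(\ell-1)=m-1$. Hence the whole sum has degree at most $m-1$.

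The only delicate point is Step 1: one must check that the $y$-interval is exactly $[r_i/n,1]$ (nonnegativity of $y$ is implied, not imposed) and that the projection $\pi$ carries the product measure on $\Delta^n$ to the stated normalization. After that, Steps 2–4 are bookkeeping.
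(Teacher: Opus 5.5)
Your proposal is correct and follows essentially the same route as the paper's proof: integrate out $y$ to get $\E\prod_i(1-r_i/n)$, rewrite each factor as $\frac1n\sum_j(1-x_{ij})$ and expand over maps $M\to N$, group the maps by their fiber partition (each partition with $\ell$ blocks arising from $(n)_\ell$ maps), and evaluate each block factor as $p_{|B|}$ using $1-z_i=\sum_{g\ne i}z_g$ and \eqref{eq:dirichlet}. Your degree count in Step~4 is the same as the paper's, just written out explicitly via $(n)_\ell/n^m=n^{-(m-\ell)}\prod_{s<\ell}(1-s/n)$.
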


All terms in Theorem~\ref{thm:Qpos} are nonnegative. For example, for $m=3$,
we have $W_1=2$, $W_2=5$, and $W_3=14$, so $(p_1\,,p_2\,,p_3)=(2/3,5/12,7/30)$.
The partitions of $[3]$ contribute $(n)_3\,p_1^3$, three times
$(n)_2\,p_1p_2\,$, and $n\,p_3\,$, and summing these, we recover the formula
for $\Vol(\mathcal{Q}_{3,n}\cap \mathcal{D}_{3,n})$ in Theorem~\ref{thm:Q}.

\subsection*{Uniform asymptotics}
We define
\begin{equation}\label{eq:psi}
  \psi(n,m):=\int_0^1\mleft(1-t^{m-1}\mright)^n\,dt
  =\frac{\Gamma(1+k)\,\Gamma(n+1)}{\Gamma(n+1+k)} .
\end{equation}

\begin{theorem}\label{thm:uniform}
As $m+n\to\infty$, in any manner (that is, with no restriction on the relative
growth of $m$ and $n$),
\begin{align*}
  \VQ &\sim \mleft((m-1)!\mright)^{-n}\mleft(1-\frac1m\mright)^{m},\\
  \VP &\sim \mleft((m-1)!\mright)^{-n}\,\psi(n,m)^{m},\\
  R &\sim \prod_{j=2}^{n}\mleft(1+\frac{1}{(m-1)j}\mright)^{m}.
\end{align*}
Moreover, for all $m\ge2$ and $n\ge1$,
\begin{equation}\label{eq:bounds}
  \begin{gathered}
  \mleft(1-\frac{12}{n+1}\mright)\psi^m\le ((m-1)!)^n\VP\le \psi^m,\\
  \mleft(1-\frac{10}{m}\mright)e^{-1}\le ((m-1)!)^n\VQ\le e^{-1},
  \end{gathered}
\end{equation}
and
\begin{equation}\label{eq:logprod}
\begin{gathered}
  \ln\prod_{j=2}^{n}\mleft(1+\frac{1}{(m-1)j}\mright)^{m}
  =\frac{m}{m-1}\,(H_n-1)-\theta,\\
  0\le\theta\le\frac{1}{m-1}.
\end{gathered}
\end{equation}
\end{theorem}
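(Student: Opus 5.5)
We establish the explicit bounds \eqref{eq:bounds} and \eqref{eq:logprod} first; the three asymptotic statements then follow from them.

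\textbf{Weak relaxation.} We start from Theorem~\ref{thm:Qpos}, which writes $((m-1)!)^n\VQ$ as a sum of nonnegative terms. The all-singletons partition contributes $\frac{(n)_m}{n^m}p_1^m$. Since $W_1=m-1$, we have $p_1=(m-1)!(m-1)/m!=1-1/m$, so this term is $\frac{(n)_m}{n^m}(1-1/m)^m$. Here we are already relying on Theorem~\ref{thm:Q}: it says the total is a polynomial in $1/n$ with constant term $(1-1/m)^m$. To get bounds uniform in $n$, we compare the sum with the $n\to\infty$ limit. Each factor $(n)_{|\pi|}/n^m\le n^{|\pi|-m}$, so every non-singleton partition is suppressed by at least one power of $1/n$. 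We expect that $\sum_\pi (n)_{|\pi|}n^{-m}\prod p_{|B|}$ is nonincreasing in $n$, or at least bounded by its limit. The cleanest route is probabilistic. Reading $p_t$ as a probability that $t$ labelled items all avoid a derangement-type event, the formula is $\mathbb{P}$ of an event for $m$ items thrown into $n$ boxes, and the limit $n\to\infty$ (all items in distinct boxes) gives the maximum. Because $(1-1/m)^m\le e^{-1}$, this yields the upper bound $e^{-1}$. For the lower bound we keep only the singleton term. A cheap bound $(n)_m/n^m\ge 1-\binom m2/n$ is not uniform in $m$, so we need a second argument. When $n$ is small relative to $m$, the non-singleton blocks must contribute: $p_t$ should behave like $e^{-1}$-ish quantities per block. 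So we would bound each $p_t\ge(1-1/m)^t\cdot(1-O(t/m))$ and sum over partitions using $\sum_\pi (n)_{|\pi|}=n^m$. That gives $((m-1)!)^n\VQ\ge (1-1/m)^m(1-c/m)$. Combined with $(1-1/m)^m\ge e^{-1}(1-1/m)$, this produces the constant $10$ after bookkeeping.

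\textbf{Strong relaxation.} We use Theorem~\ref{thm:P}. Writing $f_m(y)$ as $(m-1)!^{-1}$ times the Dirichlet density of $\{z\in\Delta: z\le y\}$, we have $(m-1)!f_m(y)=\mathbb{P}(Z\le y)$ with $Z$ uniform on $\Delta$. The key inequality is the Harris/FKG-type negative-dependence bound $\mathbb{P}(Z\le y)\le\prod_i\mathbb{P}(Z_i\le y_i)=\prod_i(1-(1-y_i)^{m-1})$. Substituting $t=1-y_i$, integrating the $n$th power, and using Fubini gives exactly $\psi^m$ as the upper bound. For the lower bound, inclusion–exclusion gives $\mathbb{P}(Z\le y)\ge\prod_i(1-(1-y_i)^{m-1})-\sum_{i<l}(1-y_i-y_l)_+^{m-1}$, and the correction is multiplicatively small. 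After raising to the $n$th power and integrating, the relative loss should be $O(1/n)$. The intuition is that the mass of $\prod(\cdot)^n$ sits where $(1-y_i)^{m-1}\lesssim 1/n$, and there the pair terms are of order $1/n^2$ each, with $\binom m2$ of them compensated by the smallness of $(1-y_i-y_l)^{m-1}$ relative to $(1-y_i)^{m-1}$. We expect this uniform-in-$m$ control, with an explicit constant $12$, to be the main obstacle. We would first try the bound $(1-a-b)^{m-1}\le (1-a)^{m-1}(1-b)^{m-1}$ together with Bernoulli's inequality $(A-\epsilon)^n\ge A^n(1-n\epsilon/A)$, and then evaluate the resulting Beta integrals explicitly.

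\textbf{Ratio.} By \eqref{eq:psi}, $\psi=\prod_{j=1}^n\frac{j}{j+k}$, so $\psi^{-m}=\prod_{j=1}^n(1+\frac{k}{j})^m$. The $j=1$ factor is $(1+k)^m=(m/(m-1))^m=(1-1/m)^{-m}$, which cancels the constant in $\VQ$. Hence $R\sim\prod_{j=2}^n(1+kj^{-1})^m$ as soon as both error factors in \eqref{eq:bounds} tend to $1$. For $\VP$ this requires $n\to\infty$. When $n$ stays bounded and $m\to\infty$, we instead use the $\VQ$ bounds together with $\psi^m\to$ its limit; there the error $1-12/(n+1)$ does not vanish, so that regime needs a separate argument. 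Since $f_m(y)^n\to\prod(1-(1-y_i)^{m-1})^n$ with pair corrections of total size $O(m^2 2^{-m})$, the claim also follows for bounded $n$. Finally, \eqref{eq:logprod} comes from $x-x^2/2\le\ln(1+x)\le x$ with $x=k/j$. This gives $\theta\le\frac{mk^2}{2}\sum_{j\ge2}j^{-2}\le\frac{mk^2}{2}(\pi^2/6-1)\le k$, since $m k\le 2$.
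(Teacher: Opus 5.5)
Your weak lower bound does not follow from your per-block estimate. If $p_t\ge(1-\frac1m)^t(1-Ct/m)$, multiplying over the blocks of $\pi$ gives only $\prod_{B\in\pi}p_{|B|}\ge(1-\frac1m)^m\prod_{B\in\pi}(1-C|B|/m)$. That last product is merely bounded below by a constant. For $n=1$, the only surviving term in Theorem~\ref{thm:Qpos} is the one-block partition, so $\hat V_Q=p_m$, and your estimate reads $p_m\ge(1-\frac1m)^m(1-C)$. A partition into $m/2$ pairs likewise loses $(1-2C/m)^{m/2}$, again a constant. To get $1-c/m$ from the partition formula, you would need the sharper fact $1-p_t/p_1^t=O(t^2/m^3)$, so that the losses add up to $O(\sum_B|B|^2/m^3)=O(1/m)$.

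Your upper bound also rests on an unproved claim: that the all-singletons term dominates, i.e., $p_t\le p_1^t$, which is a negative-dependence statement. The paper bypasses the partition formula entirely. Since $\sum_is_i=1$, it gets $\prod_i(1-s_i)\le e^{-1}$ directly. On $\{\max_is_i\le\frac12\}$ it uses $\prod_i(1-s_i)\ge e^{-1-X}\ge e^{-1}(1-X)$ with $X:=\sum_is_i^2$ and $\E X\le 2/m$. Markov's inequality gives $\Pr(\max_is_i>\frac12)\le4\E X$, which yields $1-10/m$.

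For the lower bound on $\hat V_P$, let $a_i:=(1-y_i)^{m-1}$ and $h(y)=\Pr(Z\le y)$. Your inequality $h(y)\ge\prod_i(1-a_i)-\sum_{i<l}(1-y_i-y_l)_+^{m-1}$ is false. For $m=2$ and $y=(\frac12,\frac12)$, the left side is $0$ and the right side is $\frac14$. Even in the bulk ($m=2$, $a_1=a_2=1/n$) it fails by exactly $a_1a_2$. The deficit $\prod_ih_i-h$ is governed by the products $a_ia_l$ of marginal probabilities, not by the pairwise intersection probabilities. What is true is the union bound $h\ge 1-T$ with $T:=\sum_ia_i$. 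The paper combines it with $\prod_ih_i\le e^{-T}$ and $\ln(1-x)\ge -x-x^2$ on $[0,\frac12]$ to get $h^n\ge e^{-nT^2}\prod_ih_i^n$ on $\{T\le\frac12\}$. Under the product measure $\mu$, the $a_i$ are independent with law $\mathrm{Beta}(k,n+1)$. The moment bounds $\E_\mu T\le 2/(n+1)$ and $\E_\mu T^2\le 8/(n+1)^2$ then give the constant $12$.

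For bounded $n$ and $m\to\infty$, your claim that the pair corrections total $O(m^2\,2^{-m})$ is wrong. There $n^{-1/(m-1)}\to1$, so the $1-y_i$ are spread over $[0,1]$, and $T$ is not small (its $\mu$-mean tends to $1/(n+1)$). For uniform $y$, one has $\sum_{i<l}\E(1-y_i-y_l)_+^{m-1}=\frac{m-1}{2(m+1)}$. This regime needs a different idea. The paper writes $x_{ij}=E_{ij}/S_j$ with i.i.d.\ exponentials $E_{ij}$ and shows that $\sum_iM_i\to H_n$ and $\max_iM_i\to 0$ in probability. It concludes that both $\hat V_P$ and $\psi^m$ tend to $e^{-H_n}>0$. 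The upper bound $h\le\prod_ih_i$ (the paper's Lemma~\ref{lem:NOD}), your ratio computation, and \eqref{eq:logprod} are fine.
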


In Section~\ref{sec:regimes}, we read off the behavior in each growth regime of
$m$, and we prove a transition theorem at $m\asymp\ln n$
(Theorem~\ref{thm:transition}). In Section~\ref{sec:hull}, we compare the strong
relaxation with the integer hull, and we make a detailed analysis for $m=3$, where we have the exact hull described by linear inequalities. 

\section{Exact formulae}

\begin{lemma}\label{lem:slice}
For $y\in[0,1]^m$, let
$C(y):=\{z\in\Delta: z_i\le y_i\ \forall i\in M\}$. Then
$\Vol_{m-1}(C(y))=f_m(y)$.
\end{lemma}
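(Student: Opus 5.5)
We will prove the lemma by inclusion--exclusion on the violated upper bounds, combined with the volume formula for a shifted simplex. For $S\subset M$, let
\[
A_S:=\{z\in\Delta:\ z_i> y_i\ \forall i\in S\}.
\]
Then $C(y)=\Delta\setminus\bigcup_{i\in M}A_{\{i\}}$, and $A_{\{i\}}\cap A_{\{i'\}}=A_{\{i,i'\}}$. Inclusion--exclusion (valid because all sets have finite measure) therefore gives
\[
\Vol_{m-1}(C(y))=\sum_{S\subset M}(-1)^{|S|}\Vol_{m-1}(A_S),
\]
with $A_\emptyset=\Delta$. Boundary pieces where $z_i=y_i$ have measure zero, so strict and weak inequalities may be used interchangeably.

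Next we compute $\Vol_{m-1}(A_S)$. We substitute $w_i:=z_i-y_i$ for $i\in S$ and $w_i:=z_i$ otherwise. This translation maps $A_S$ onto
\[
\{w\in\R^m_{\ge0}:\ \textstyle\sum_i w_i=1-\sum_{i\in S}y_i\},
\]
which is the dilate $s\Delta$ with $s:=1-\sum_{i\in S}y_i$. The set is empty when $s<0$ and null when $s=0$. A translation within the affine hyperplane $\sum_i z_i=\text{const}$ commutes with the projection that drops the last coordinate, so the projected measure used for integrals over $\Delta$ is preserved. Scaling by $s$ in this $(m-1)$-dimensional measure multiplies volume by $s^{m-1}$. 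Hence
\[
\Vol_{m-1}(A_S)=\frac{s_+^{m-1}}{(m-1)!},
\]
using $\Vol(\Delta)=1/(m-1)!$, which is the case $\alpha=0$ of \eqref{eq:dirichlet}.

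Substituting into the inclusion--exclusion sum yields exactly the expression \eqref{eq:f} for $f_m(y)$. The case $m-1=0$ does not arise, since $m\ge2$.

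The only step needing care is the measure bookkeeping in the second paragraph. The measure on $\Delta$ is defined through projection, so we must check that the translated and dilated copies are measured consistently. This holds because the projection $\pi'$ dropping $z_m$ is linear, so it intertwines translations and dilations in the hyperplane $\sum_i z_i=s$ with translations and dilations of $\R^{m-1}$. The facts $y_i\le1$ and $y\ge0$ are not needed beyond ensuring that the sets are as described. The formula is valid for all real $y$ with the $(\cdot)_+$ convention.
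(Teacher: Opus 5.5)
Your proof is correct, but it takes a different route from the paper's. The paper stays in full dimension. It applies inclusion--exclusion to $\{z\in\R^m:0\le z_i\le y_i,\ \sum_i z_i\le t\}$, whose pieces are ordinary $m$-simplices of volume $(t-y(S))_+^m/m!$. It then obtains the slice volume as $F'(1)$, using Fubini with the unit-Jacobian substitution $s=\sum_i z_i$ and using $m\ge2$ to make $F$ continuously differentiable. You apply inclusion--exclusion directly on $\Delta$, so your terms are exactly the termwise $t$-derivatives of the paper's, evaluated at $t=1$.

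The paper's route needs only textbook simplex volumes and never has to reason about the projected measure on hyperplanes. Its price is the differentiation step, which tacitly uses continuity of the slice volume at $s=1$ (this does hold here). Your route reads off the slice at $s=1$ directly, with no calculus. Its price is the measure bookkeeping, which you handle correctly through linearity of the coordinate projection. One wording point: the translation carries $\{\sum_i z_i=1\}$ to the parallel hyperplane $\{\sum_i w_i=s\}$; it does not act within one hyperplane. This is harmless, because every such hyperplane is measured through the same projection.

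One correction to your closing aside: $y\ge0$ is genuinely used. The translation maps $A_S$ onto $s\Delta$ only when $z_i>y_i$ already forces $z_i\ge0$ for $i\in S$. So the formula fails for general real $y$: for $m=2$ and $y=(-1,2)$, $C(y)=\emptyset$ but $f_2(y)=-1$. Only the bound $y\le1$ is dispensable.
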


\begin{proof}
For $t\ge 0$, let
$F(t):=\Vol_m\{z\in\R^m:0\le z_i\le y_i\,,\ \sum_i z_i\le t\}$. For
$S\subset M$, the set $\{z\ge 0,\ \sum_i z_i\le t,\ z_i>y_i\ \forall i\in S\}$
is, after the translation $z_i\mapsto z_i-y_i$ for $i\in S$, the simplex
$\{z\ge0,\ \sum_i z_i\le t-y(S)\}$, which has volume $(t-y(S))_+^m/m!$, where
$y(S):=\sum_{i\in S}y_i\,$. Inclusion-exclusion over the constraints
$z_i\le y_i$ gives
\[
  F(t)=\frac{1}{m!}\sum_{S\subset M}(-1)^{|S|}\mleft(t-y(S)\mright)_+^{m}.
\]
Because $m\ge 2$, $F$ is continuously differentiable. The change of variables
\[
  (z_1\,,\dots,z_{m-1}\,,z_m)\mapsto(z_1\,,\dots,z_{m-1}\,,s),\qquad
  s=\sum_i z_i\,,
\]
has unit Jacobian, so by Fubini's theorem,
$F(t)=\int_0^t \phi(s)\,ds$, where $\phi(s)$ is the projected $(m-1)$-volume
of the slice $\{\sum_i z_i=s\}$ of the box. Hence,
$\Vol_{m-1}(C(y))=\phi(1)=F'(1)=f_m(y)$.
\end{proof}

\begin{proof}[Proof of Theorem~\ref{thm:P}]
In $\mathcal{P}_{m,n}\,$, the inequalities $0\le x_{ij}\le y_i$ force $y\ge 0$, so
$y\in[0,1]^m$. For fixed $y$, the constraints on $x$ decouple over the columns
$j\in N$, and each column $x_{\cdot j}$ ranges independently over $C(y)$. By
Fubini's theorem and Lemma~\ref{lem:slice}, we have
$\VP=\int_{[0,1]^m}f_m(y)^n\,dy$.

For $m=2$, from \eqref{eq:f}, we have $f_2(y)=(y_1+y_2-1)_+\,$. The variable
$u=y_1+y_2-1$ restricted to $[0,1]$ has density $1-u$ under the uniform
measure on $[0,1]^2$, so $\VP=\int_0^1u^n(1-u)\,du=\frac{1}{(n+1)(n+2)}$.
\end{proof}

\begin{remark}
For $m=2$, the formula in Theorem~\ref{thm:P} has a direct combinatorial
proof. Each column is determined by a single uniform cut $U_j\in[0,1]$, with
$x_{1j}=U_j$ and $x_{2j}=1-U_j\,$. With $y$ uniform on $[0,1]^2$, the
conditions $y_1\ge U_j$ and $y_2\ge1-U_j$ for all $j$ state that, among the
$n+2$ independent uniform variables $U_1\,,\dots,U_n\,,y_1\,,1-y_2\,$, the
variable $y_1$ is the largest and $1-y_2$ is the smallest, an event of
probability $1/((n+2)(n+1))$. For $m\ge3$, this argument breaks down: some gaps
have length equal to a difference of two cuts, and the relative order of
independent uniform variables does not determine comparisons with such
differences.
\end{remark}

\begin{proof}[Proof of Theorem~\ref{thm:Q}]
On $\mathcal{D}_{m,n}$ with $x\ge 0$, the constraints imply $x_{ij}\le 1$, so each
column satisfies $x_{\cdot j}\in\Delta$. Let $r_i:=\sum_{j\in N}x_{ij}\in[0,n]$.
Because $x\ge0$ forces $y\ge 0$, for fixed $x$, the variable $y_i$ ranges
exactly over $[r_i/n,1]$. By Fubini's theorem,
\begin{equation}\label{eq:VQint}
  \VQ=\int_{\Delta^n}\prod_{i\in M}\mleft(1-\frac{r_i}{n}\mright)\,dx .
\end{equation}
Expanding the product, we obtain
\[
  \sum_{S\subset M}(-1/n)^{|S|}\sum_{\varphi:S\to N}\,\prod_{i\in S}x_{i\varphi(i)}\,.
\]
For fixed $\varphi$, the columns are independent. Column $j$ carries the
square-free monomial $\prod_{i\in\varphi^{-1}(j)}x_{ij}\,$, whose integral is
$1/(m-1+|\varphi^{-1}(j)|)!$ by \eqref{eq:dirichlet}. Because the number of
maps $\varphi:[t]\to N$ with fiber sizes $(a_1\,,\dots,a_n)$ is
$t!/\prod_j a_j!$, the sum over $\varphi$ with $|S|=t$ equals
\[
  t!\sum_{a_1+\dots+a_n=t}\,\prod_j\frac{1}{a_j!\,(m-1+a_j)!}=t!\,[z^{t}]\,g_m^n\,.
\]
This proves the formula.

For $m=2$, we have $g_2=1+\frac z2+\frac{z^2}{12}+O(z^3)$, so
$[z]g_2^n=\frac n2$ and $[z^2]g_2^n=\frac n{12}+\binom n2\frac14$.
Substituting, we obtain
$\VQ=1-1+\frac{1}{n^2}\mleft(\frac n6+\frac{n(n-1)}4\mright)=\frac{3n-1}{12n}$.
For $m=3$, let $\tilde g:=2g_3=1+\frac z3+\frac{z^2}{24}+\frac{z^3}{360}+O(z^4)$,
so that $[z]\tilde g^n=\frac n3$, $[z^2]\tilde g^n=\frac n{24}+\binom n2\frac19$,
and $[z^3]\tilde g^n=\frac n{360}+\frac{n(n-1)}{72}+\binom n3\frac1{27}$.
Substituting into $2^n\VQ=\sum_{t=0}^3\binom3t\mleft(-\frac1n\mright)^t\,t!\,[z^t]\tilde g^n$,
we obtain $2^n\VQ=\frac{80n^2-15n-2}{270n^2}$.

For polynomiality, let $\tilde g:=(m-1)!\,g_m=1+\frac zm+O(z^2)$, and expand
$\tilde g^n=\sum_\ell\binom n\ell(\tilde g-1)^\ell$. Hence,
$t![z^{t}]\tilde g^n$ is a polynomial in $n$ of degree $t$ with leading
coefficient $m^{-t}$, and so $((m-1)!)^n\VQ$ is a polynomial in $1/n$ of
degree at most $m$, with constant term
$\sum_{t}\binom m{t}(-1/m)^{t}=(1-1/m)^m$.
\end{proof}

\begin{proof}[Proof of Theorem~\ref{thm:Qpos}]
Let the columns $x_{\cdot 1}\,,\dots,x_{\cdot n}$ be independent and uniform on
$\Delta$, and let $\E$ denote expectation. Because $\Vol(\Delta)=1/(m-1)!$,
from \eqref{eq:VQint}, we have
$((m-1)!)^n\VQ=\E\prod_{i\in M}(1-r_i/n)$. Because
$1-r_i/n=\frac1n\sum_{j\in N}(1-x_{ij})$, expanding the product, we obtain
\[
  \prod_{i\in M}\mleft(1-\frac{r_i}{n}\mright)
  =\frac{1}{n^m}\sum_{\varphi:M\to N}\,\prod_{i\in M}\mleft(1-x_{i\varphi(i)}\mright).
\]
For fixed $\varphi$, the columns are independent, so the expectation of the
summand is
\[
  \prod_{j\in N}\E\prod_{i\in\varphi^{-1}(j)}(1-x_{ij}).
\]
The
uniform distribution on $\Delta$ is invariant under permutations of the
coordinates, so, for $z$ uniform on $\Delta$ and $I\subset M$, the quantity
$\E\prod_{i\in I}(1-z_i)$ depends only on $|I|$. We claim that it equals
$p_{|I|}\,$. Indeed, for $I=[t]$, we have
\[
  \prod_{i\in[t]}(1-z_i)=\prod_{i\in[t]}\,\sum_{g\ne i}z_g
  =\sum_{\sigma}z^{\alpha(\sigma)},
\]
where $\sigma$ ranges as in the definition of $W_t$ and
$\alpha(\sigma)_g:=|\sigma^{-1}(g)|$, and from \eqref{eq:dirichlet}, we have
$\E z^{\alpha(\sigma)}=(m-1)!\prod_g\alpha(\sigma)_g!/(m-1+t)!$. Finally, the
nonempty fibers of $\varphi$ form a partition $\pi$ of $M$, and each partition
with $\ell$ blocks arises from exactly $(n)_\ell$ maps $\varphi$, which proves
the formula. Because $(n)_\ell$ is a polynomial in $n$ of degree $\ell\le m$
with zero constant term, $(n)_\ell/n^m$ is a polynomial in $1/n$ of degree at
most $m-1$.
\end{proof}

\begin{remark}
The quantities in Theorem~\ref{thm:Qpos} have a direct probabilistic interpretation.
We drop $m-1$ independent uniform points (``cuts'') into $[0,1]$; the lengths of
the $m$ resulting gaps, from left to right, form a uniform point of $\Delta$.
We then drop $t$ further independent uniform points, labeled $1,\dots,t$.
Now, $p_t$ is the probability that, for each $i\in[t]$, point~$i$ does not lie
in the $i$th gap. Equivalently, $W_t$ counts the words in $m-1$ copies of a
letter $b$ and the distinct letters $1,\dots,t$ in which letter~$i$ is not
preceded by exactly $i-1$ copies of $b$; there are $(m-1+t)!/(m-1)!$ such
words in all. In this language, $((m-1)!)^n\VQ$ is the probability that $m$
test points, each placed in an independent, uniformly random column, all avoid
their forbidden gaps. Because $W_1=m-1$, we have $p_1=1-1/m$, and, as $n\to\infty$,
the term with $|\pi|=m$ tends to $(1-1/m)^m$, the constant term in
Theorem~\ref{thm:Q}.
\end{remark}

\section{Uniform asymptotics}\label{sec:uniform}

In the proofs in this section and in Section~\ref{sec:regimes}, we use some
standard probabilistic language, which we recall. We write $\Pr$ and $\E$ for
probability and expectation, and $\Unif(K)$ for the uniform distribution on a
set $K$. We write $\mathrm{Exp}(1)$ for the exponential distribution with
density $e^{-t}$ on $[0,\infty)$, and $\mathrm{Beta}(\alpha,\beta)$ for the
distribution on $[0,1]$ with density proportional to
$t^{\alpha-1}(1-t)^{\beta-1}$. For random variables $X_\ell$ indexed by a
growing parameter $\ell$, we say that $X_\ell\to c$ \emph{in probability} if
$\Pr(|X_\ell-c|>\eta)\to0$ for every $\eta>0$, and we write
$X_\ell=O_p(a_\ell)$ if, for every $\eta>0$, there is a constant $C$ with
$\Pr(|X_\ell|>Ca_\ell)<\eta$ for all large $\ell$. We say that $X_\ell$
converges \emph{in distribution} to $X$, and we write $X_\ell\Rightarrow X$, if
$\Pr(X_\ell\le t)\to\Pr(X\le t)$ at every continuity point $t$ of the limit.
For probability measures $\nu$ and $\mu$ with densities $p$ and $q$, we use the
total-variation distance $\|\nu-\mu\|_{\mathrm{TV}}:=\int|p-q|$; if it tends
to $0$, then $|\nu(A)-\mu(A)|\to0$ uniformly over events $A$. Finally, we use
the bounded convergence theorem to pass limits through integrals of uniformly
bounded functions; in particular, if $X_\ell\to c$ in probability and
$|X_\ell|\le1$, then $\E X_\ell\to c$.

\subsection*{Probabilistic reformulation}
Let the columns $x_{\cdot 1}\,,\dots,x_{\cdot n}$ be independent and uniform on
$\Delta$. Let $s_i:=r_i/n$ and $M_i:=\max_{j}x_{ij}\,$. Integrating out $y$ in
$\mathcal{P}_{m,n}\cap \mathcal{D}_{m,n}$ (where $y_i\in[M_i\,,1]$) and using \eqref{eq:VQint}, we
obtain
\begin{equation}\label{eq:prob}
  \begin{gathered}
  \hat V_P:=((m-1)!)^n\VP=\E\prod_{i\in M}(1-M_i),\\
  \hat V_Q:=((m-1)!)^n\VQ=\E\prod_{i\in M}(1-s_i).
  \end{gathered}
\end{equation}
The two relaxations differ only in replacing the row \emph{maximum} by the row
\emph{average}. Here, probabilities and expectations are simply normalized
volumes and integrals over $\Delta^n$, so \eqref{eq:prob} is the statement that
these volumes are averages over the $n$ columns. The marginal law of each $x_{ij}$ is
$\mathrm{Beta}(1,m-1)$, so $\Pr(M_i\le t)=(1-(1-t)^{m-1})^n$ and
$\E(1-M_i)=\psi(n,m)$. The substitution $w=t^{m-1}$ gives $\psi=k\,B(k,n+1)$,
which is the Gamma form in \eqref{eq:psi}. Thus, from
Theorem~\ref{thm:uniform}, we have $\hat V_P\sim\prod_i\E(1-M_i)$ and
$\hat V_Q\sim\prod_i\E(1-s_i)$: asymptotically, the rows behave as if they
were independent.

\subsection*{The strong relaxation}
Let $h:=(m-1)!\,f_m\,$, so that $h(y)=\Pr_{z\sim\Unif(\Delta)}(z\le y)$ by
Lemma~\ref{lem:slice}, and $\hat V_P=\int_{[0,1]^m}h^n$. Let
$h_i(y_i):=\Pr(z_i\le y_i)=1-(1-y_i)^{m-1}$.

\begin{lemma}[Negative orthant dependence]\label{lem:NOD}
For all $y\in[0,1]^m$, we have $h(y)\le\prod_{i\in M}h_i(y_i)$.
\end{lemma}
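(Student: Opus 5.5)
We need $\Pr(z\le y)\le\prod_i\Pr(z_i\le y_i)$ for $z\sim\Unif(\Delta)$. Our plan is to represent the uniform point of $\Delta$ through independent exponentials, and then induct on coordinates using a monotonicity property of conditional laws.

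\textbf{Representation.} Write $z=E/\sum_g E_g$ with $E_1,\dots,E_m$ i.i.d.\ $\mathrm{Exp}(1)$. The cleanest route is to prove that the conditional law of the remaining coordinates, given $z_1\le y_1$, is stochastically larger than their unconditional law. We then induct on $m$: the event $\{z_i\le y_i,\ i\ge2\}$ is decreasing in $(z_2,\dots,z_m)$, so conditioning on the first constraint can only lower its probability.

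\textbf{Conditioning on $z_1$.} Given $z_1=u$, the vector $(z_2,\dots,z_m)/(1-u)$ is uniform on the $(m-2)$-simplex, independently of $u$. This is the standard aggregation/neutrality property, which follows directly from \eqref{eq:dirichlet}. Hence
\[
\Pr(z_i\le y_i\ \forall i\ge2\mid z_1=u)=G(u),
\]
where
\[
G(u):=\Pr\bigl((1-u)w_i\le y_i\ \forall i\ge 2\bigr)
\]
for $w$ uniform on the smaller simplex. The function $G$ is nondecreasing in $u$, since shrinking the vector makes the decreasing event more likely. Therefore
\[
\Pr(z\le y)=\Pr(z_1\le y_1)\,\E[G(z_1)\mid z_1\le y_1]\le \Pr(z_1\le y_1)\,\E[G(z_1)].
\]
The inequality holds because conditioning $z_1$ to lie in a lower set decreases the expectation of a nondecreasing function (Chebyshev's association inequality on the line). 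Moreover $\E[G(z_1)]=\Pr(z_i\le y_i\ \forall i\ge2)$.

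\textbf{Induction.} We iterate the same argument on the event $\{z_i\le y_i,\ i\ge2\}$ for the original $z$. Conditioning on $z_2$ leaves the remaining $m-2$ coordinates as a scaled uniform simplex point, and the function of $z_2$ is again nondecreasing. So the same step peels off $\Pr(z_2\le y_2)$, and so on. The last step is trivial.

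To be careful, we can phrase the whole argument as a general claim. For any $I\subset M$ and $i\in I$, we show
\[
\Pr\bigl(z_j\le y_j\ \forall j\in I\bigr)\le \Pr(z_i\le y_i)\,\Pr\bigl(z_j\le y_j\ \forall j\in I\setminus\{i\}\bigr).
\]
To prove this, condition on $z_i$: the law of $(z_j)_{j\ne i}$ given $z_i=u$ is $(1-u)$ times a uniform point of the $(m-2)$-simplex. The resulting function of $u$ is nondecreasing, and the association step applies. Inducting on $|I|$ gives the lemma, with marginals $h_i(y_i)=1-(1-y_i)^{m-1}$.

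\textbf{Main obstacle.} The main point to verify is the conditional structure: given $z_i=u$, the other coordinates equal $(1-u)$ times an independent uniform point of the smaller simplex. This holds because the density of the uniform distribution on $\Delta$ is constant, so the slice $\{z_i=u\}$ is a scaled copy of the $(m-2)$-simplex carrying uniform conditional measure. With that in hand, monotonicity and the one-dimensional Chebyshev inequality finish the argument.
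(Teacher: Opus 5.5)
Your proof is correct and is essentially the paper's argument. You condition on one coordinate and note that the rest is $(1-u)$ times a uniform point of the smaller simplex, so the conditional probability is nondecreasing in $u$; then you apply Chebyshev's association inequality and induct on $|I|$. The exponential representation you announce at the start is never used and can be dropped.
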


In words, capping one coordinate of a point of $\Delta$ leaves more mass for
the other coordinates, so the caps are harder to satisfy together than
separately.

\begin{proof}
We show that $\Pr(z_i\le y_i\ \forall i\in I)\le\prod_{i\in I}\Pr(z_i\le y_i)$
by induction on $|I|$; the case $|I|\le1$ is trivial. We fix $i_0\in I$, and we
let $I':=I\setminus\{i_0\}$. Conditionally on $z_{i_0}=t<1$, the vector
$(z_i)_{i\ne i_0}/(1-t)$ is uniform on the simplex of dimension $m-2$. Hence,
the conditional probability
$\rho(t):=\Pr(z_i\le y_i\ \forall i\in I'\mid z_{i_0}=t)$ is the probability
of the event $\{w_i\le y_i/(1-t)\ \forall i\in I'\}$ for a fixed uniform vector
$w$, and so $\rho$ is nondecreasing in $t$. Because $\one\{t\le y_{i_0}\}$ is
nonincreasing in $t$, we can apply Chebyshev's association inequality (for a
real random variable $T$, a nonincreasing function $u$, and a nondecreasing
function $v$, we have $\E\mleft(u(T)v(T)\mright)\le\E\mleft(u(T)\mright)\,\E\mleft(v(T)\mright)$), and we obtain
$\E\mleft(\one\{z_{i_0}\le y_{i_0}\}\rho(z_{i_0})\mright)\le\Pr(z_{i_0}\le y_{i_0})\,\E\rho(z_{i_0})$.
The right-hand side equals
$\Pr(z_{i_0}\le y_{i_0})\Pr(z_i\le y_i\ \forall i\in I')$, and we complete the
proof by applying the induction hypothesis.
\end{proof}

\begin{proposition}\label{prop:Pbound}
For all $m\ge 2$ and $n\ge 1$, we have
\[
  \mleft(1-\frac{12}{n+1}\mright)\psi^m\le\hat V_P\le\psi^m .
\]
\end{proposition}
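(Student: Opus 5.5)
The upper bound follows directly from Lemma~\ref{lem:NOD}. Since $h\le\prod_i h_i(y_i)$ pointwise on $[0,1]^m$ and $h\ge 0$, we get $h^n\le\prod_i h_i(y_i)^n$. The right-hand side is a product of functions of single coordinates, so Fubini gives
\[
\hat V_P=\int_{[0,1]^m}h^n\le\prod_{i\in M}\int_0^1\bigl(1-(1-y_i)^{m-1}\bigr)^n\,dy_i=\psi^m,
\]
where the last step substitutes $t=1-y_i$ in \eqref{eq:psi}.

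For the lower bound, I will show that $h$ is multiplicatively close to $\prod_i h_i$ on the region that carries the mass of $\prod_i h_i^n$. Write $q_i:=1-h_i(y_i)=(1-y_i)^{m-1}$. The pairwise exceedance probability is $p_{ij}:=\Pr(z_i>y_i,\,z_j>y_j)=(1-y_i-y_j)_+^{m-1}$, by the same translation argument as in Lemma~\ref{lem:slice}. By inclusion--exclusion, $1-h\le\sum_i q_i-\sum_{i<j}p_{ij}+\sum_{i<j<l}p_{ijl}$, while $1-\prod_i(1-q_i)\ge\sum_i q_i-\sum_{i<j}q_iq_j$. I aim for a deficit bound
\[
\prod_i h_i-h\;\le\;\Delta(y):=\sum_{i<j}\bigl(q_iq_j-p_{ij}\bigr)+\sum_{i<j<l}p_{ijl},
\]
all of whose terms are explicit. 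I would then use $h^n\ge\bigl(\prod_i h_i\bigr)^n-n\bigl(\prod_i h_i\bigr)^{n-1}\Delta$, which holds because $a^n-b^n\le n a^{n-1}(a-b)$ for $0\le b\le a$. After integrating, each term $\int\prod_l h_l^{\,n-1}\,(q_iq_j-p_{ij})\,dy$ factors over coordinates outside $\{i,j\}$ into powers of $\int_0^1 h_l^{n-1}$. What remains is an explicit two-dimensional integral. Each one-dimensional factor is a Beta-type integral, $\int_0^1(1-t^{m-1})^{n-1}t^{a(m-1)}\,dt=k\,B\bigl(k(a(m-1)+1),n\bigr)$, so the ratios to $\psi$ are Gamma-function ratios.

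The main obstacle is keeping the error uniform in $m$. Bounding $q_iq_j-p_{ij}$ crudely by $q_iq_j$ gives a total relative error of order $m^2/n$. That is useless when $m\gtrsim\sqrt n$. The cancellation in $q_iq_j-p_{ij}=(1-y_i-y_j+y_iy_j)^{m-1}-(1-y_i-y_j)_+^{m-1}$ must therefore be exploited. Write $u_i=1-y_i$; on the relevant region $u_i\approx n^{-k}$ when $m$ is small, and $u_i\approx 1-O(\ln n/m)$ when $m$ is large. Using $(a+b)^{m-1}-a^{m-1}\le (m-1)b(a+b)^{m-2}$ with $b=u_iu_j$, each pair should contribute $O(1/(m^2 n))$ after normalization by $\psi^2$. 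Summing over $\binom m2$ pairs then gives a total $O(1/n)$, and the triple terms are handled similarly.

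If the cancellation proves awkward in some range of $m$, the fallback is a case split. For $m\le C\ln n$, I would use the polynomial-in-$u$ estimate above. For large $m$, I would use the exponential representation $z_i=E_i/\sum_l E_l$ with $E_l$ i.i.d.\ $\mathrm{Exp}(1)$: conditionally on $S=\sum_l E_l$ the events $\{E_i\le y_iS\}$ are nearly independent, and concentration of $S$ around $m$ controls the correction. Tracking the Gamma-ratio constants should yield the factor $1-12/(n+1)$. When $12/(n+1)\ge1$ the lower bound is trivial, so only $n\ge 12$ needs care.
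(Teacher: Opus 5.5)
Your upper bound is exactly the paper's argument. Your deficit bound and the linearization $a^n-b^n\le na^{n-1}(a-b)$ (valid because $0\le h\le\prod_ih_i$) are also correct. The gap is that the lower bound is never actually proved. You reject the crude estimate $q_iq_j-p_{ij}\le q_iq_j$ on the grounds that it costs a relative error of order $m^2/n$. You then replace it with a cancellation argument and a fallback case split that are only sketched (``should contribute'', ``should yield the factor $1-12/(n+1)$''). As a result, neither uniformity in $m$ nor the constant $12$ is established. There is also a slip in the sketch: $q_iq_j-p_{ij}=(a+b)^{m-1}-a_+^{m-1}$ with $a=1-y_i-y_j$ and $b=y_iy_j$, not $b=u_iu_j$.

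The premise is false, and correcting it closes the gap. Taking $a=1$ in your Beta-type formula gives $\int_0^1h_i^{n-1}q_i\,dy_i=k\,B(k+1,n)=\frac kn\,\psi(n,m)$, while $\int_0^1h_l^{n-1}\,dy_l=\psi(n-1,m)=\bigl(1+\frac kn\bigr)\psi(n,m)$. So each $q_i$ has weighted size $k/n\approx1/(mn)$, not $1/n$. It is the sum $\sum_iq_i$ that is of order $1/n$, uniformly in $m$. Combining the union bound $1-h\le\sum_iq_i$ with $1-\prod_ih_i\ge\sum_iq_i-\sum_{i<j}q_iq_j$ gives $\prod_ih_i-h\le\sum_{i<j}q_iq_j$, with no need for $p_{ij}$ or $p_{ijl}$. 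Hence
\[
\psi^m-\hat V_P\le n\binom m2\Bigl(\frac kn\Bigr)^{2}\Bigl(1+\frac kn\Bigr)^{m-2}\psi^m=\frac{m}{2(m-1)}\cdot\frac{(1+k/n)^{m-2}}{n}\,\psi^m\le\frac{e^{1/n}}{n}\,\psi^m ,
\]
using $(m-2)k<1$. This is below $\frac{12}{n+1}\psi^m$ for every $n\ge1$, so no cancellation, case split, or exponential representation is needed.

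With this fix, your argument is the paper's in linearized form. The paper works under the product measure $\mu$ with density $\prod_ih_i^n/\psi^m$. There the $W_i=q_i$ are independent $\mathrm{Beta}(k,n+1)$, so $T:=\sum_iW_i$ satisfies $\E_\mu T\le2/(n+1)$ and $\E_\mu T^2\le8/(n+1)^2$. From $h\ge1-T$ and $\prod_ih_i\le e^{-T}$, it gets $h^n\ge e^{-nT^2}\prod_ih_i^n$ on $\{T\le\frac12\}$. Markov's inequality then gives the factor $1-\frac{4}{n+1}-\frac{8n}{(n+1)^2}$. Your version avoids the truncation to $\{T\le\frac12\}$. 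The price is the weight $h^{n-1}$ in place of $h^n$, which contributes only the factor $(1+k/n)^{m-2}\le e^{1/n}$.
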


\begin{proof}
The upper bound follows from Lemma~\ref{lem:NOD}, because
$\int_{[0,1]^m}\prod_ih_i^n=\prod_i\int_0^1h_i^n=\psi^m$.

For the lower bound, let $\mu$ be the probability measure on $[0,1]^m$ with
density $\prod_ih_i(y_i)^n/\psi^m$. Under $\mu$, the variables
$W_i:=(1-y_i)^{m-1}$ are independent. Substituting $w=\varepsilon^{m-1}$ in
the density $\propto(1-\varepsilon^{m-1})^n$ of $\varepsilon=1-y_i\,$, we see
that each $W_i$ has law $\mathrm{Beta}(k,n+1)$. Hence,
\[
  \E W_i=\frac{k}{n+1+k},\qquad \E W_i^2=\frac{k(k+1)}{(n+1+k)(n+2+k)} .
\]
Let $T:=\sum_iW_i\,$. Because $mk\le 2$ and $k\le1$, we have
$\E T\le 2/(n+1)$ and $\E T^2\le m\,\E W_1^2+(\E T)^2\le 8/(n+1)^2$.

By the union bound, $h\ge 1-T$. Also, $\prod_ih_i=\prod_i(1-W_i)\le e^{-T}$,
and $\ln(1-x)\ge -x-x^2$ on $\mleft[0,\frac12\mright]$. So, on the event $\mleft\{T\le\frac12\mright\}$,
we have
\[
  h^n\ge(1-T)^n\ge e^{-nT}e^{-nT^2}\ge e^{-nT^2}\prod_ih_i^n .
\]
Therefore,
\begin{align*}
  \hat V_P&\ge\psi^m\,\E_\mu\mleft(e^{-nT^2}\one\mleft\{T\le\tfrac12\mright\}\mright)\\
  &\ge\psi^m\mleft(1-\Pr_\mu\mleft(T>\tfrac12\mright)-n\E_\mu T^2\mright)\\
  &\ge\psi^m\mleft(1-\frac{4}{n+1}-\frac{8n}{(n+1)^2}\mright),
\end{align*}
where we use $e^{-x}\ge1-x$ and Markov's inequality.
\end{proof}

\begin{proposition}\label{prop:fixedn}
For fixed $n$, as $m\to\infty$, both $\hat V_P$ and $\psi(n,m)^m$ tend to
$e^{-H_n}$.
\end{proposition}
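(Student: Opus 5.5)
Two things need to be shown: that $\psi(n,m)^m\to e^{-H_n}$ for fixed $n$, and that $\hat V_P/\psi^m\to1$ for fixed $n$ as $m\to\infty$. The second would make $\hat V_P$ share the limit of $\psi^m$. Proposition~\ref{prop:Pbound} only gives $\hat V_P/\psi^m\ge 1-12/(n+1)$, which is useless for fixed $n$, so a separate argument is needed for the second part.

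\textbf{The limit of $\psi^m$.} I use the Gamma form \eqref{eq:psi} with $k=1/(m-1)\to0$. It gives
\[
\psi(n,m)=\Gamma(1+k)\prod_{j=1}^{n}\frac{j}{j+k}.
\]
Taking logarithms, $\ln\Gamma(1+k)=-\gamma k+O(k^2)$ and $\ln\frac{j}{j+k}=-k/j+O(k^2)$. Hence $\ln\psi=-k(\gamma+H_n)+O_n(k^2)$.

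Multiplying by $m$ and using $mk\to1$ gives $m\ln\psi\to-(\gamma+H_n)$. This contradicts the statement, so I must recheck the expansion. The check at $n=0$: $\psi(0,m)=\int_0^1 1\,dt=1$, but the formula gives $\Gamma(1+k)\Gamma(1)/\Gamma(1+k)=1$. So the $\Gamma(1+k)$ cancels against the $\Gamma(1+k)$ inside $\Gamma(n+1+k)=\Gamma(1+k)\prod_{j=1}^n(j+k)$. Thus in fact $\psi=\prod_{j=1}^n\frac{j}{j+k}$, and $m\ln\psi\to-H_n$, i.e.\ $\psi^m\to e^{-H_n}$. I will present this product form directly.

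\textbf{The ratio $\hat V_P/\psi^m$.} The upper bound $\hat V_P\le\psi^m$ is already available. For the lower bound I use the probabilistic form \eqref{eq:prob}, $\hat V_P=\E\prod_i(1-M_i)$ with $M_i=\max_j x_{ij}$. For fixed $n$ and large $m$, the entries of each uniform column of $\Delta$ are approximately $E_{ij}/m$ with $E_{ij}$ i.i.d.\ $\mathrm{Exp}(1)$.

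I then write $\prod_i(1-M_i)=\exp\bigl(\sum_i\ln(1-M_i)\bigr)$. The sum $\sum_i M_i$ is a sum of $m$ terms of size $O(1/m)$. Since $\E M_i=1-\psi(n,m)\sim H_n/m$, its mean tends to $H_n$.

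The goal is to show $\sum_iM_i\to H_n$ in probability and that $\sum_i M_i^2\to0$. Bounded convergence then gives $\E\prod_i(1-M_i)\to e^{-H_n}$ directly. This bypasses comparison with $\psi^m$, since both sides are shown to converge to the same value.

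For the concentration step, the variance of $\sum_iM_i$ is at most $\sum_j\sum_i$ variances of the $x_{ij}$ plus covariances. I would handle it by bounding $\sum_i M_i\le\sum_{j}\sum_i x_{ij}=n$, which controls the tails. I would also use negative dependence of the coordinates within a column: $\mathrm{Cov}(M_i,M_{i'})\le0$ for $i\ne i'$. This follows because the $M_i$ are nondecreasing functions of disjoint coordinates of negatively associated Dirichlet vectors, independently across columns.

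That gives $\mathrm{Var}(\sum_iM_i)\le\sum_i\E M_i^2=O(m\cdot m^{-2})\to0$. The same moment bound yields $\sum_iM_i^2\to0$ in $L^1$.

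\textbf{Main obstacle.} The main difficulty is justifying the covariance step. I would cite negative association of Dirichlet/uniform-simplex vectors, which is the same phenomenon as Lemma~\ref{lem:NOD}, and its preservation under coordinatewise maxima of independent copies. If that is awkward, a fallback is the representation $x_{\cdot j}=E_{\cdot j}/\sum_iE_{ij}$ with exponentials. In that form, $\sum_iE_{ij}/m\to1$ almost uniformly over the finitely many $j$, which reduces the problem to independent rows, where the law of large numbers applies directly.
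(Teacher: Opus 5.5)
Your proof is correct. For $\psi^m$ it is the paper's computation in a cleaner form. The paper expands $\ln\Gamma(1+k)-\ln\Gamma(n+1+k)+\ln\Gamma(n+1)$ to first order in $k$, whereas your product $\psi=\prod_{j=1}^n j/(j+k)$ makes $m\ln\psi\to-H_n$ immediate. Drop the false start with the stray $\Gamma(1+k)$ from the write-up.

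For $\hat V_P$, the paper uses what you list as your fallback. With $x_{ij}=E_{ij}/S_j$, it bounds $\sum_iM_i$ below by $\sum_i\max_jE_{ij}/\max_jS_j$ and above by $\sum_i\max_jE_{ij}/\min_jS_j$. It then applies the law of large numbers to the $m$ independent rows and to the $n$ column sums, and it controls $\max_iM_i$ through $\max_{i,j}E_{ij}=O_p(\ln m)$.

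Your main route is instead a second-moment argument, and it gives an explicit rate. Since $M_i^2\le\sum_jx_{ij}^2$ and $\E x_{ij}^2=2/(m(m+1))$, you get $\sum_i\E M_i^2\le 2n/(m+1)$. Hence $\mathrm{Var}(\sum_iM_i)\le 2n/(m+1)$ once the covariances are nonpositive. The same bound gives $\max_iM_i\to0$ in probability, which is what you need to linearize $\ln(1-M_i)$; say this explicitly.

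The price is the covariance input. Full negative association does hold here, but you do not need it. The case $|I|=2$ of Lemma~\ref{lem:NOD} gives $\Pr(x_{ij}\le a,\,x_{i'j}\le b)\le\Pr(x_{ij}\le a)\Pr(x_{i'j}\le b)$. Independence across columns turns this into $\Pr(M_i\le a,\,M_{i'}\le b)\le\Pr(M_i\le a)\Pr(M_{i'}\le b)$. Hoeffding's identity $\mathrm{Cov}(X,Y)=\iint\bigl(\Pr(X\le a,Y\le b)-\Pr(X\le a)\Pr(Y\le b)\bigr)\,da\,db$ then yields $\mathrm{Cov}(M_i,M_{i'})\le0$.

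So your route is self-contained given the paper's lemma and is quantitative. The paper's route is more elementary but purely qualitative.
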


\begin{proof}
From \eqref{eq:psi}, we have
$\ln\psi=\ln\Gamma(1+k)-(\ln\Gamma(n+1+k)-\ln\Gamma(n+1))
=-\gamma k-k(H_n-\gamma)+O(k^2)$. Hence, $\psi^m\to e^{-H_n}$.

For $\hat V_P\,$, let $x_{ij}=E_{ij}/S_j\,$, where the $E_{ij}$ are
independent and identically distributed with law $\mathrm{Exp}(1)$, and
$S_j:=\sum_iE_{ij}\,$. Then
\[
  \frac{1}{\max_jS_j}\sum_i\max_jE_{ij}\ \le\ \sum_iM_i\ \le\
  \frac{1}{\min_jS_j}\sum_i\max_jE_{ij}\,.
\]
By the law of large numbers, $S_j/m\to1$ for each of the finitely many $j$,
and $\frac1m\sum_i\max_jE_{ij}\to\E\max_{j\le n}E_{1j}=H_n\,$. So,
$\sum_iM_i\to H_n$ in probability. Also,
\[
  \max_iM_i\le\frac{\max_{i,j}E_{ij}}{\min_jS_j}=O_p\mleft(\frac{\ln m}{m}\mright)\to0,
\]
so $\sum_iM_i^2\le\max_iM_i\sum_iM_i\to0$. On the event
$\mleft\{\max_iM_i\le\frac12\mright\}$, we have
$e^{-\sum M_i-\sum M_i^2}\le\prod_i(1-M_i)\le e^{-\sum M_i}$. Hence,
$\prod_i(1-M_i)\to e^{-H_n}$ in probability, and by bounded convergence,
$\hat V_P\to e^{-H_n}$.
\end{proof}

\subsection*{The weak relaxation}

\begin{proposition}\label{prop:Qbound}
For all $m\ge2$ and $n\ge1$, we have
\[
  \mleft(1-\frac{10}{m}\mright)e^{-1}\le\hat V_Q\le e^{-1}.
\]
\end{proposition}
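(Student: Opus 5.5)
The upper bound will be deterministic and the lower bound a second-moment estimate, both working from the probabilistic form \eqref{eq:prob}, $\hat V_Q=\E\prod_{i\in M}(1-s_i)$, where the columns are independent and uniform on $\Delta$.

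\textbf{Upper bound.} Each column sums to $1$, so $\sum_i r_i=n$, hence $\sum_{i\in M}s_i=1$ and $s_i\in[0,1]$ for every realization. By the AM--GM inequality,
\[
\prod_i(1-s_i)\le\Bigl(\frac{1}{m}\sum_i(1-s_i)\Bigr)^m=\Bigl(1-\frac1m\Bigr)^m\le e^{-1}.
\]
This holds pointwise, so it survives taking expectations.

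\textbf{Lower bound.} I will use $\ln(1-s)\ge -s-s^2$ for $s\in[0,\tfrac12]$. Let $A:=\{\max_i s_i\le\tfrac12\}$. On $A$, using $\sum_i s_i=1$ and $e^{-x}\ge 1-x$,
\[
\prod_i(1-s_i)\ge e^{-1}e^{-\sum_i s_i^2}\ge e^{-1}\Bigl(1-\sum_i s_i^2\Bigr).
\]
Off $A$ the product is nonnegative, so
\[
\hat V_Q\ge e^{-1}\Bigl(\Pr(A)-\E\sum_i s_i^2\Bigr).
\]
By the union bound and Markov's inequality applied to $s_i^2$, we have $\Pr(s_i>\tfrac12)\le 4\E s_i^2$, hence $\Pr(A^c)\le 4\sum_i\E s_i^2$. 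Combining,
\[
\hat V_Q\ge e^{-1}\Bigl(1-5\sum_{i\in M}\E s_i^2\Bigr).
\]

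\textbf{Second moment.} The key computation is $\E s_i^2$. Since $s_i$ is the average of $n$ independent $\mathrm{Beta}(1,m-1)$ variables, $\E s_i^2=m^{-2}+\operatorname{Var}(x_{i1})/n$. The variance of $\mathrm{Beta}(1,m-1)$ is $(m-1)/(m^2(m+1))\le 1/m^2$, so $\E s_i^2\le 2/m^2$. Summing over $i\in M$ gives $\sum_i\E s_i^2\le 2/m$, and therefore $\hat V_Q\ge(1-10/m)e^{-1}$.

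The only delicate point is the bookkeeping of constants: the variance must be bounded sharply enough, by $1/m^2$ rather than crudely by $\E x^2=2/(m(m+1))$, to reach exactly the factor $10/m$.
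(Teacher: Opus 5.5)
Your proof is correct and follows the paper's argument essentially step for step: the same good event $\{\max_i s_i\le\frac12\}$, the same bound $\prod_i(1-s_i)\ge e^{-1}(1-\sum_i s_i^2)$ on that event, Markov's inequality for the complement, and the same variance computation giving $\E\sum_i s_i^2=\frac1m+\frac{m-1}{nm(m+1)}\le\frac2m$. The only cosmetic difference is your AM--GM upper bound, which gives the slightly sharper pointwise bound $(1-\frac1m)^m$ before you pass to $e^{-1}$; the paper instead uses $1-s_i\le e^{-s_i}$ together with $\sum_i s_i=1$.
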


\begin{proof}
Because $\sum_is_i=1$, we have $\prod_i(1-s_i)\le e^{-1}$. For the lower
bound, we note that each $x_{ij}$ has variance $\frac{m-1}{m^2(m+1)}$, so
$X:=\sum_is_i^2$ satisfies $\E X=\frac1m+\frac{m-1}{nm(m+1)}\le\frac2m$.
Moreover, $\Pr\mleft(\max_is_i>\frac12\mright)\le4\E X$. On the event
$\mleft\{\max_is_i\le\frac12\mright\}$, we have $\prod_i(1-s_i)\ge e^{-1-X}\ge e^{-1}(1-X)$.
Therefore, $\hat V_Q\ge e^{-1}(1-4\E X-\E X)\ge e^{-1}(1-10/m)$.
\end{proof}

\begin{proof}[Proof of Theorem~\ref{thm:uniform}]
The bounds \eqref{eq:bounds} are Propositions~\ref{prop:Pbound}
and~\ref{prop:Qbound}. For the asymptotics, it suffices to show that every
sequence with $m+n\to\infty$ has a subsequence along which the claimed ratios
tend to $1$.

\emph{Strong relaxation.} If $n\to\infty$ along a subsequence, then we apply
Proposition~\ref{prop:Pbound}. Otherwise, we pass to a subsequence with $n$
constant; then $m\to\infty$, and we apply Proposition~\ref{prop:fixedn}, which
suffices because the limit $e^{-H_n}$ is positive.

\emph{Weak relaxation.} If $m\to\infty$ along a subsequence, then we apply
Proposition~\ref{prop:Qbound} together with
$e^{-1}\mleft(1-\frac1m\mright)\le\mleft(1-\frac1m\mright)^m\le e^{-1}$. Otherwise, we pass to a
subsequence with $m$ constant; then $n\to\infty$, and we apply
Theorem~\ref{thm:Q}.

\emph{Ratio.} Dividing the two asymptotics, we obtain
$R\sim\mleft((1+k)\psi\mright)^{-m}$, because $1-\frac1m=\frac{1}{1+k}$. Now,
\[
  (1+k)\psi=\frac{\Gamma(2+k)\Gamma(n+1)}{\Gamma(n+1+k)}
  =\prod_{j=2}^n\frac{j}{j+k} .
\]
Finally, \eqref{eq:logprod} follows from $0\le x-\ln(1+x)\le x^2/2$:
\[
  0\le\theta=m\sum_{j=2}^n\mleft(\frac kj-\ln\mleft(1+\tfrac kj\mright)\mright)
  \le\frac{mk^2}{2}\mleft(\frac{\pi^2}{6}-1\mright)\le\frac{1}{m-1}.
\]
Here, we use $\frac{m}{m-1}\sum_{j=2}^n\frac1j=\frac{m}{m-1}(H_n-1)$ and
$mk\le 2$.
\end{proof}

\begin{remark}
For $n=1$, the two relaxations coincide, and indeed, the product in
Theorem~\ref{thm:uniform} is empty. For $m=2$, from Theorem~\ref{thm:uniform},
we have $R\sim\frac{(n+1)^2}{4}$, in agreement with
Theorems~\ref{thm:P} and~\ref{thm:Q}.
\end{remark}

\section{Asymptotic regimes and the transition at \texorpdfstring{$m\asymp\ln n$}{m ~ ln n}}
\label{sec:regimes}

From Theorem~\ref{thm:uniform} and \eqref{eq:logprod}, we have, uniformly,
\begin{equation}\label{eq:master}
  \begin{split}
  \ln R&=\frac{m}{m-1}(H_n-1)+O\mleft(\frac1m\mright)+o(1)\\
  &=\ln n+\frac{\ln n}{m-1}+\frac{m}{m-1}(\gamma-1)+O\mleft(\frac1m\mright)+o(1),
  \end{split}
\end{equation}
where we assume $n\to\infty$ for the second form. For fixed $m$, the exact
product is more precise:
\[
  R\sim\mleft(\frac{\Gamma(n+1+k)}{\Gamma(2+k)\Gamma(n+1)}\mright)^m
  \sim\frac{n^{m/(m-1)}}{\Gamma\mleft(\frac{2m-1}{m-1}\mright)^m}.
\]
The decisive term in \eqref{eq:master} is $\frac{\ln n}{m-1}$, which is
unbounded, bounded, or negligible according as $m\ll\ln n$, $m\asymp\ln n$, or
$m\gg\ln n$.

\begin{corollary}\label{cor:regimes}
As $n\to\infty$, $R$ behaves as follows.
\begin{center}
\renewcommand{\arraystretch}{1.6}
\begin{tabular}{@{}ll@{}}
\toprule
growth of $m$ & $R$\\
\midrule
$m$ fixed
  & $\sim\dfrac{n^{m/(m-1)}}{\Gamma\mleft(\frac{2m-1}{m-1}\mright)^m}$\\
$m\sim\beta\ln n$
  & $\sim e^{\gamma-1+1/\beta}\,n$\\
$m/\ln n\to\infty$
  & $\sim e^{\gamma-1}\,n$\\
\bottomrule
\end{tabular}
\end{center}
Here, $e^{\gamma-1}\approx0.655$.
\end{corollary}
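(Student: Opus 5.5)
The plan is to derive all three rows from Theorem~\ref{thm:uniform}, which gives, uniformly as $m+n\to\infty$,
\[
R\sim\bigl((1+k)\psi\bigr)^{-m},
\]
together with the exact log-identity \eqref{eq:logprod}. In each row the task is to control the error terms on the scale relevant to that regime.

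\emph{Fixed $m$.} Here $k=1/(m-1)$ is constant. I use the Gamma form
\[
\bigl((1+k)\psi\bigr)^{-1}=\frac{\Gamma(n+1+k)}{\Gamma(2+k)\,\Gamma(n+1)}
\]
and the standard ratio asymptotic $\Gamma(n+1+k)/\Gamma(n+1)\sim n^{k}$. Raising to the fixed power $m$ gives $n^{mk}/\Gamma(2+k)^m$. Since $mk=m/(m-1)$ and $2+k=(2m-1)/(m-1)$, this is the first row.

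\emph{Growing $m$.} Here I use \eqref{eq:logprod} directly:
\[
\ln R=\frac{m}{m-1}(H_n-1)-\theta+o(1),\qquad 0\le\theta\le\frac{1}{m-1}.
\]
Since $m\to\infty$ in both remaining rows, $\theta\to0$. Now insert $H_n=\ln n+\gamma+o(1)$ and write $\frac{m}{m-1}=1+\frac{1}{m-1}$. This gives
\[
\ln R=\ln n+\frac{\ln n}{m-1}+\Bigl(1+\frac{1}{m-1}\Bigr)(\gamma-1+o(1))+o(1).
\]
The term $\frac{1}{m-1}(\gamma-1)$ tends to $0$.

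\begin{itemize}
\item If $m\sim\beta\ln n$, then $\frac{\ln n}{m-1}\to1/\beta$, so $\ln R=\ln n+\gamma-1+1/\beta+o(1)$. Exponentiating gives $R\sim e^{\gamma-1+1/\beta}\,n$.
\item If $m/\ln n\to\infty$, then $\frac{\ln n}{m-1}\to0$, and the same computation gives $R\sim e^{\gamma-1}n$.
\end{itemize}

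The only delicate point is that "$\sim$" for $R$ requires $\ln R$ up to an additive $o(1)$, not a multiplicative one. This is handled because Theorem~\ref{thm:uniform} gives $R$ itself up to a factor $1+o(1)$ uniformly, and \eqref{eq:logprod} is an exact identity with an explicit error $\theta$. So no error term is multiplied by the growing quantity $\ln n$ except inside $\frac{m}{m-1}H_n$, which is computed exactly.

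I take $\beta>0$ fixed. The case $\beta=0$ is not covered by that row and corresponds to $m\ll\ln n$, where $R/n\to\infty$. The numerical value follows from $\gamma-1\approx-0.4228$, so $e^{\gamma-1}\approx0.655$.
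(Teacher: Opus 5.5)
Your proposal is correct and takes essentially the paper's route. The fixed-$m$ row comes from the exact Gamma form of $\bigl((1+k)\psi\bigr)^{-m}$. The growing-$m$ rows come from \eqref{eq:logprod} with $\theta\le 1/(m-1)\to0$ and $H_n=\ln n+\gamma+o(1)$, which is precisely the expansion \eqref{eq:master} from which the paper reads off the table.
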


The only remaining case is bounded $n$ with $m\to\infty$. There,
$R\to e^{H_n-1}$ stays bounded; for $n=1$, the relaxations coincide.

\subsection*{The transition}
From the table, we see that $R$ is always of order at least $n$ (for
$n\to\infty$), and that the only question is the size of the excess factor
$R/n$. This factor is unbounded for $m\ll\ln n$, bounded for
$m\asymp\ln n$, and tends to the constant $e^{\gamma-1}$ for $m\gg\ln n$. In
the next theorem, we make the transition precise, and we relate it to the
geometry of a typical point of $\mathcal{P}_{m,n}\cap \mathcal{D}_{m,n}\,$.

\begin{theorem}[Transition at $m\asymp\ln n$]\label{thm:transition}
Let $m,n\to\infty$ with $\frac{\ln n}{m-1}\to\tau\in[0,\infty]$. Then the
following hold.
\begin{enumerate}
\item We have $R/n\to e^{\gamma-1+\tau}$, where we interpret the limit
as $+\infty$ when $\tau=\infty$.
\item If $(x,y)$ is uniform on $\mathcal{P}_{m,n}\cap \mathcal{D}_{m,n}\,$, then each $y_i$
converges in distribution to $\Unif[1-e^{-\tau},1]$. This limit is the point
mass at $1$ when $\tau=\infty$, and it is $\Unif[0,1]$ when $\tau=0$.
\item If $(x,y)$ is uniform on $\mathcal{Q}_{m,n}\cap \mathcal{D}_{m,n}\,$, then each $y_i$
converges in distribution to $\Unif[0,1]$, for every $\tau$.
\end{enumerate}
In particular, the critical scaling is $m\sim\beta\ln n$, with $\tau=1/\beta$.
\end{theorem}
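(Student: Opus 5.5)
Part~1 follows from \eqref{eq:master}. Since $m\to\infty$, the error terms are $o(1)$, and $\frac{m}{m-1}(\gamma-1)\to\gamma-1$. Hence $\ln(R/n)=\frac{\ln n}{m-1}+\gamma-1+o(1)$, and $\frac{\ln n}{m-1}\to\tau$, which gives the limit (or $+\infty$ when $\tau=\infty$).

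For part~2, by symmetry we may take $i=1$. Marginally, the law of $(y_1,\dots,y_m)$ under the uniform measure on $\mathcal{P}_{m,n}\cap\mathcal{D}_{m,n}$ has density $h(y)^n/\hat V_P$ on $[0,1]^m$ (Theorem~\ref{thm:P}). The plan is to compare this with the product measure $\mu$ from the proof of Proposition~\ref{prop:Pbound}, whose density is $\prod_ih_i^n/\psi^m$.

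\emph{Step 1: total variation.} Write $\rho:=h^n/\prod_ih_i^n\le1$ (Lemma~\ref{lem:NOD}). Then $d\nu/d\mu=\rho/\E_\mu\rho$, and
\[
\|\nu-\mu\|_{\mathrm{TV}}=\E_\mu\left|\frac{\rho}{\E_\mu\rho}-1\right|\le\frac{2(1-\E_\mu\rho)}{\E_\mu\rho}.
\]
The proof of Proposition~\ref{prop:Pbound} gives $\E_\mu\rho=\hat V_P/\psi^m\ge1-12/(n+1)$, so $\|\nu-\mu\|_{\mathrm{TV}}=O(1/n)\to0$. It therefore suffices to find the limiting law of $y_1$ under $\mu$, where $y_1$ has density $(1-(1-y)^{m-1})^n/\psi$.

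\emph{Step 2: one-dimensional limit.} Set $\varepsilon=1-y_1$, so $\varepsilon$ has density proportional to $(1-\varepsilon^{m-1})^n$ on $[0,1]$. Pointwise, $\varepsilon^{m-1}n=\exp\bigl((m-1)(\ln\varepsilon+\tfrac{\ln n}{m-1})\bigr)$. As $m\to\infty$ this tends to $0$ for $\varepsilon<e^{-\tau}$ and to $\infty$ for $\varepsilon>e^{-\tau}$. Hence $(1-\varepsilon^{m-1})^n\approx e^{-n\varepsilon^{m-1}}$ converges to $\one\{\varepsilon<e^{-\tau}\}$. The normalizer $\psi$ lies in $[0,1]$ and equals the integral of this quantity, so by bounded convergence $\psi\to e^{-\tau}$. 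This is consistent with $\psi^m\sim e^{-H_n}$-type estimates; directly, $\ln\psi\approx -k\ln n\to-\tau$ via the Gamma form \eqref{eq:psi}, at least when $k\ln n$ is bounded. Bounded convergence again shows that $\Pr(\varepsilon\le s)\to\min(s,e^{-\tau})/e^{-\tau}$, which is uniform on $[0,e^{-\tau}]$. Translating back, $y_1\Rightarrow\Unif[1-e^{-\tau},1]$.

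When $\tau=\infty$, the indicator limit is $0$ on all of $(0,1]$, so a direct argument is needed. Compute
\[
\Pr(\varepsilon>s)=\frac{1}{\psi}\int_s^1(1-\varepsilon^{m-1})^n\,d\varepsilon\le\frac{(1-s^{m-1})^n}{\psi}.
\]
The numerator is at most $\exp(-ns^{m-1})$, which is superpolynomially small relative to $\psi\ge\Gamma(1+k)n^{-k}$ roughly, since $ns^{m-1}=\exp(\ln n+(m-1)\ln s)$ and $\ln n/(m-1)\to\infty$. So $y_1\to1$. This boundary case, together with making the convergence $\psi\to e^{-\tau}$ rigorous via Gamma asymptotics uniformly, is the main technical care point.

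For part~3, the law of $(x,y)$ uniform on the weak relaxation is as in \eqref{eq:VQint}: given $x$, $y_1$ is uniform on $[s_1,1]$, with the $x$-density tilted by $\prod_i(1-s_i)/\hat V_Q$. First show that $\max_is_i\to0$ in probability under the untilted law. Indeed $\E\sum_is_i^2\le2/m$, so $\max_is_i^2\le\sum_is_i^2\to0$. Next, the tilt ratio $\prod_i(1-s_i)/\hat V_Q$ lies between $e^{-X}(1-10/m)^{-1}$-type bounds, using $\prod_i(1-s_i)\in[e^{-1-X},e^{-1}]$ on $\{\max s_i\le\tfrac12\}$ together with Proposition~\ref{prop:Qbound}. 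Hence it converges to $1$ in $L^1$ of the untilted law, so the tilted law of $s_1$ also satisfies $s_1\to0$ in probability. Finally, $\Pr(y_1\le t)=\E'[(t-s_1)_+/(1-s_1)]\to t$ by bounded convergence, giving $y_1\Rightarrow\Unif[0,1]$ for every $\tau$.
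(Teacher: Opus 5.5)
Your proposal is correct and follows essentially the paper's proof. You use the same three ingredients: the total-variation comparison of the $y$-marginal with the product measure $\mu$ via Lemma~\ref{lem:NOD} and Proposition~\ref{prop:Pbound}; the pointwise limit of the one-dimensional density $(1-\varepsilon^{m-1})^n$ together with bounded convergence; and, for the weak relaxation, the bounded tilt $\prod_i(1-s_i)/\hat V_Q\le(1-10/m)^{-1}$ combined with $s_i\to0$ in probability. The loose end you flag for $\tau=\infty$ closes without Gamma asymptotics, as in the paper, via $\psi\ge\int_0^{a}(1-\varepsilon^{m-1})^n\,d\varepsilon\ge a(1-1/n)^n\ge 1/(4n)$ with $a:=n^{-1/(m-1)}$, while the numerator is at most $e^{-\sqrt n}$ once $\frac{\ln n}{m-1}\ge2\ln\frac1s$.
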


\begin{proof}
(1) For $\tau<\infty$, this follows from \eqref{eq:master} together with
$\frac{m}{m-1}(\gamma-1)\to\gamma-1$. For $\tau=\infty$, from
\eqref{eq:logprod}, we have
$\ln R-\ln n\ge(H_n-1-\ln n)+\frac{H_n-1}{m-1}-1+o(1)\to\infty$.

(2) By the proof of Theorem~\ref{thm:P}, the $y$-marginal $\nu$ of the uniform
measure on $\mathcal{P}_{m,n}\cap \mathcal{D}_{m,n}$ has density $h^n/\hat V_P\,$. Let $\mu$ be the
product measure from the proof of Proposition~\ref{prop:Pbound}. Because
$h^n\le\prod_ih_i^n$ by Lemma~\ref{lem:NOD}, a direct computation gives
$\|\nu-\mu\|_{\mathrm{TV}}\le 2(1-\hat V_P/\psi^m)\le 24/(n+1)\to0$.

Under $\mu$, the variable $\varepsilon=1-y_i$ has density proportional to
$(1-\varepsilon^{m-1})^n$. We write
$n\varepsilon^{m-1}=\exp\mleft((m-1)\mleft(\ln\varepsilon+\frac{\ln n}{m-1}\mright)\mright)$.
Suppose first that $\tau<\infty$. Then $n\varepsilon^{m-1}\to0$ for
$\varepsilon<e^{-\tau}$, so the density tends to $1$ there, by
$(1-w)^n\ge1-nw$. Also, $n\varepsilon^{m-1}\to\infty$ for
$\varepsilon>e^{-\tau}$, so the density tends to $0$ there, by
$(1-w)^n\le e^{-nw}$. By bounded convergence, we then have
$\varepsilon\Rightarrow\Unif[0,e^{-\tau}]$.

If $\tau=\infty$, then we fix $\eta>0$, and we let $a:=n^{-1/(m-1)}$. The mass
on $(\eta,1]$ is at most $\exp(-n\eta^{m-1})\le e^{-\sqrt n}$ once
$\frac{\ln n}{m-1}\ge2\ln\frac1\eta$. The mass on $[0,a]$ is at least $a/4$,
and $\ln(1/a)\le\ln n$. So, $\Pr(\varepsilon>\eta)\to0$.

(3) By \eqref{eq:VQint}, the $x$-marginal of the uniform measure on
$\mathcal{Q}_{m,n}\cap \mathcal{D}_{m,n}$ has density $\prod_i(1-s_i)/\hat V_Q\le\mleft(1-\frac{10}m\mright)^{-1}$
with respect to the product of uniform measures on $\Delta^n$. Under the
latter, $\E s_i=\frac1m\to0$, so $s_i\to0$ in probability under the uniform
measure on $\mathcal{Q}_{m,n}\cap \mathcal{D}_{m,n}$ as well. Given $x$, the variable $y_i$ is
uniform on $[s_i\,,1]$, which proves (3).
\end{proof}

\section{The integer hull}\label{sec:hull}

We compare the strong relaxation with the integer hull
\[
  \mathcal{H}_{m,n}:=\operatorname{conv}\mleft\{(x,y)\in \mathcal{P}_{m,n}\cap \mathcal{D}_{m,n}:\ y\in\{0,1\}^m\mright\}.
\]
Because $\mathcal{P}_{m,n}$ and $\mathcal{Q}_{m,n}$ contain the same points with $y\in\{0,1\}^m$,
the polytope $\mathcal{H}_{m,n}$ is also the integer hull of $\mathcal{Q}_{m,n}\cap \mathcal{D}_{m,n}\,$,
and $\mathcal{H}_{m,n}\subset \mathcal{P}_{m,n}\cap \mathcal{D}_{m,n}\subset \mathcal{Q}_{m,n}\cap \mathcal{D}_{m,n}\,$.

\begin{theorem}[Cho, Johnson, Padberg, and Rao~\cite{CJPR83}]\label{thm:hull}
For all $n\ge1$, the polytope $\mathcal{H}_{3,n}$ is the set of points
$(x,y)\in \mathcal{P}_{3,n}\cap \mathcal{D}_{3,n}$ that satisfy
\[
  x_{1a}+x_{2b}+x_{3c}+y_1+y_2+y_3\ge2\qquad\forall a,b,c\in N.
\]
The inequalities in which $a$, $b$, and $c$ are not all distinct are implied by
$\mathcal{P}_{3,n}\cap \mathcal{D}_{3,n}\,$; in particular, $\mathcal{H}_{3,n}=\mathcal{P}_{3,n}\cap \mathcal{D}_{3,n}$ for $n\le2$.
\end{theorem}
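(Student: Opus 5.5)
The main content of Theorem~\ref{thm:hull}, the full description of $\mathcal{H}_{3,n}$, is cited from~\cite{CJPR83}, so I would not reprove it. What needs proof here are the two supplementary claims. First, the inequalities with $a,b,c$ not all distinct are implied by $\mathcal{P}_{3,n}\cap \mathcal{D}_{3,n}$. Second, as a consequence, $\mathcal{H}_{3,n}=\mathcal{P}_{3,n}\cap \mathcal{D}_{3,n}$ for $n\le 2$.

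For the first claim, I would split into cases according to which indices coincide. By symmetry among the three facilities, the cases are:
\begin{itemize}
\item $a=b=c$;
\item exactly two of $a,b,c$ equal, for example $a=b\ne c$ (the other placements are identical after relabeling facilities).
\end{itemize}

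\emph{Case $a=b=c=j$.} The demand constraint gives $x_{1j}+x_{2j}+x_{3j}=1$. Since $x_{ij}\le y_i$, we get $y_1+y_2+y_3\ge1$. Adding these yields the required sum $\ge 2$.

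\emph{Case $a=b=j\ne c=\ell$.} Write $x_{1j}+x_{2j}=1-x_{3j}$. The left-hand side becomes
\[
1-x_{3j}+x_{3\ell}+y_1+y_2+y_3 .
\]
Since $x_{3j}\le y_3$, the terms $y_3-x_{3j}$ are nonnegative, so it suffices to show $y_1+y_2+x_{3\ell}\ge 1$. This follows from $y_1+y_2\ge x_{1\ell}+x_{2\ell}=1-x_{3\ell}$. Thus every such inequality is a nonnegative combination of constraints defining $\mathcal{P}_{3,n}\cap\mathcal{D}_{3,n}$ and is implied.

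For the second claim: when $n\le2$, there are no triples of distinct indices in $N$. Every inequality in the cited description is therefore of the non-distinct type, so $\mathcal{H}_{3,n}=\mathcal{P}_{3,n}\cap\mathcal{D}_{3,n}$.

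No step is a real obstacle. The only care needed is the bookkeeping in the two-equal case, making sure the symmetric placements such as $a=c\ne b$ are covered by relabeling the facilities.
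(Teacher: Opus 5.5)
Your treatment of the two supplementary claims is correct. It is exactly the computation at the end of the paper's proof: $x_{1a}+x_{2a}=1-x_{3a}$, then $x_{3a}\le y_3$ and $y_1+y_2\ge x_{1c}+x_{2c}=1-x_{3c}$. (The paper assumes only $a=b$, with $c$ arbitrary, which already covers $a=b=c$; your separate case is harmless.) The deduction for $n\le2$ is also fine.

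The gap is that the proposal never establishes the main assertion: that $\mathcal{P}_{3,n}\cap\mathcal{D}_{3,n}$ together with the displayed family cuts out exactly $\mathcal{H}_{3,n}$. You only show that part of that family is redundant. Deferring the main assertion to~\cite{CJPR83} is not a verbatim citation. Their result describes the facets of the set-packing polytope in the variables $x$ and $1-y$, with the demand constraints relaxed to $\sum_i x_{ij}\le1$, and the new inequalities appear there as odd-cycle inequalities of chordless $9$-cycles. To get the statement from it, you must argue that $\mathcal{H}_{3,n}$ is the face of that polytope on which all demand constraints are tight, so its description is their facet list with those inequalities turned into equations. You must then check that, on this face, the list reduces to the constraints of $\mathcal{P}_{3,n}\cap\mathcal{D}_{3,n}$ plus the displayed inequalities.

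The paper instead proves the characterization directly. It writes $\mathcal{H}_{3,n}$ as the convex hull of $\bigcup_S\{\chi_S\}\times\Delta_S^n$. Thus $(x,y)\in\mathcal{H}_{3,n}$ if and only if there are convex weights $\lambda_S$ with $y=\sum_S\lambda_S\chi_S$ and every column lies in the Minkowski sum $\sum_S\lambda_S\Delta_S$. For $m=3$, this sum is a base polytope of a submodular function, equal to $\{z\in\Delta:\lambda_{\{i\}}\le z_i\le y_i\}$. Eliminating the weights of the two- and three-element sets leaves linear conditions on $\mu_i:=\lambda_{\{i\}}$. Linear-programming duality then shows that membership is equivalent to $\sum_l\min_j x_{lj}+\sum_l y_l\ge2$, which is the displayed family. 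Lemma~\ref{lem:hullsuff} reuses this construction, so the omitted part is both the substance of the theorem and something the paper needs later.
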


Cho, Johnson, Padberg, and Rao~\cite[Theorem~6.1 and Remark~2.3]{CJPR83}
describe all facets of the integer polytope of the set-packing formulation in
the variables $x$ and $1-y$, in which the demand constraints are relaxed to
$\sum_ix_{ij}\le1$. Because $\mathcal{H}_{3,n}$ is the face of that polytope on which the
demand constraints hold with equality, their description yields
Theorem~\ref{thm:hull}. Using the demand constraints, we can rewrite the
inequalities of Theorem~\ref{thm:hull} as the odd-cycle
inequalities~\cite{Pad73} of the chordless cycles of length $9$ in the
intersection graph of that formulation~\cite{CJPR83}. Cho, Padberg, and
Rao~\cite[Section~5]{CPR83} also describe all facets for $m\ge3$ facilities and
$n=3$ customers. We include a short proof of Theorem~\ref{thm:hull}, different
from that of~\cite{CJPR83}, because we use its construction in
Lemma~\ref{lem:hullsuff} below.

\begin{proof}
For a nonempty set $S\subset M$, let $\chi_S\in\{0,1\}^M$ be its characteristic
vector, and let $\Delta_S:=\{z\in\Delta: z_i=0\ \forall i\notin S\}$. The
polytope $\mathcal{H}_{3,n}$ is the convex hull of the union of the polytopes
$\{\chi_S\}\times\Delta_S^n\,$. So $(x,y)\in \mathcal{H}_{3,n}$ if and only if there are
weights $\lambda_S\ge0$ with $\sum_S\lambda_S=1$ and $y=\sum_S\lambda_S\chi_S$
such that every column $x_{\cdot j}$ lies in the Minkowski sum
$\sum_S\lambda_S\Delta_S\,$. This Minkowski sum is the base polytope of the
submodular function $T\mapsto\sum_{S\cap T\ne\emptyset}\lambda_S\,$. Because
$m=3$, it consists of the $z\in\Delta$ with $z_i\le y_i$ and
$z_i\ge\mu_i:=\lambda_{\{i\}}$ for all $i\in M$. Solving for the weights of
the sets with two or three elements, we see that a vector $\mu$ arises from
some admissible $\lambda$ if and only if $0\le\mu_i\le y_i$ and
$y_i+\sum_{l\ne i}\mu_l\le1$ for all $i$, and $\sum_i\mu_i\ge2-\sum_iy_i\,$.

Let $m_l:=\min_jx_{lj}\,$. It follows that a point
$(x,y)\in \mathcal{P}_{3,n}\cap \mathcal{D}_{3,n}$ lies in $\mathcal{H}_{3,n}$ if and only if the maximum of
$\sum_l\mu_l$ over the $\mu$ with $0\le\mu\le m$ and
$\sum_{l\ne i}\mu_l\le1-y_i$ for all $i$ is at least $2-\sum_iy_i\,$. By
linear-programming duality, and because the dual feasible region has eight
vertices, this maximum equals the least of $\sum_lm_l\,$, of $m_i+1-y_i$ for
$i\in M$, of $2-y_i-y_k$ for $i\ne k$, and of $(3-\sum_ly_l)/2$. On
$\mathcal{P}_{3,n}\cap \mathcal{D}_{3,n}\,$, each of these quantities except the first is at least
$2-\sum_iy_i\,$, because $x_{ij}\ge1-y_k-y_l$ for $\{i,k,l\}=M$, because
$y\ge0$, and because $\sum_ly_l\ge1$. Hence $(x,y)\in \mathcal{H}_{3,n}$ if and only if
$\sum_lm_l+\sum_ly_l\ge2$, which is the displayed family.

Finally, suppose that $a=b$, say. Because $y_1+y_2\ge1-x_{3c}$ and
$x_{3a}\le y_3\,$, we have
$x_{1a}+x_{2a}+x_{3c}+\sum_ly_l=1-x_{3a}+x_{3c}+\sum_ly_l\ge2+y_3-x_{3a}\ge2$.
The other cases are symmetric.
\end{proof}

The next lemma gives a simple sufficient condition for membership in the
integer hull, valid for all $m$.

\begin{lemma}\label{lem:hullsuff}
Let $m\ge2$. If $(x,y)\in \mathcal{P}_{m,n}\cap \mathcal{D}_{m,n}$ and $\sum_ly_l\ge m-1$, then
$(x,y)\in \mathcal{H}_{m,n}\,$.
\end{lemma}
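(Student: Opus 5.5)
The plan is to write $(x,y)$ explicitly as a convex combination of points of $\mathcal{H}_{m,n}$, reusing the decomposition from the proof of Theorem~\ref{thm:hull}. With $\chi_S$ and $\Delta_S$ as there, $\mathcal{H}_{m,n}$ contains every point $\sum_S\lambda_S(\chi_S,x^S)$ with $\lambda_S\ge0$, $\sum_S\lambda_S=1$, and $x^S\in\Delta_S^n$. Since the columns of each $x^S$ can be chosen independently, it suffices to find weights $\lambda$ with $y=\sum_S\lambda_S\chi_S$ such that every column $x_{\cdot j}$ lies in the Minkowski sum $\sum_S\lambda_S\Delta_S$.

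\emph{Choice of weights.} Put $d_i:=1-y_i\in[0,1]$. The hypothesis $\sum_ly_l\ge m-1$ is exactly $\sum_id_i\le1$. Define
\[
  \lambda_{M\setminus\{i\}}:=d_i\quad(i\in M),\qquad
  \lambda_M:=1-\sum_id_i\ge0,
\]
and set all other weights to $0$. These weights are nonnegative and sum to $1$. Coordinate $i$ of $\sum_S\lambda_S\chi_S$ receives $\lambda_M$ plus all $d_l$ with $l\ne i$, that is, $1-d_i=y_i$. For $m=2$ the sets $M\setminus\{i\}$ are singletons, and the same formulas still apply.

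\emph{Membership of columns.} As in the proof of Theorem~\ref{thm:hull}, $\Delta_S$ is the base polytope of the submodular function $T\mapsto\one\{S\cap T\ne\emptyset\}$. A Minkowski sum of base polytopes is the base polytope of the sum of the functions, so $\sum_S\lambda_S\Delta_S$ is the base polytope of
\[
  F(T)=\sum_{S\cap T\ne\emptyset}\lambda_S=1-\sum_{S\cap T=\emptyset}\lambda_S .
\]
A support set $S$ is disjoint from a nonempty $T$ only if $S=M\setminus\{i\}$ and $T=\{i\}$. Hence $F(\{i\})=1-d_i=y_i$, and $F(T)=1$ whenever $|T|\ge2$. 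A vector $z\ge0$ with $\sum_iz_i=1$ lies in this base polytope exactly when $z(T)\le F(T)$ for all $T$. For a column $x_{\cdot j}$, these conditions read $x_{ij}\le y_i$, which holds in $\mathcal{P}_{m,n}$, and $x(T)\le1$ for $|T|\ge2$, which holds because the column lies in $\Delta$ on $\mathcal{D}_{m,n}$. Therefore every column lies in the Minkowski sum, and $(x,y)\in\mathcal{H}_{m,n}$.

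The main obstacle is justifying the base-polytope description of the Minkowski sum, namely that it equals $\{z\in\Delta:\ z(T)\le F(T)\ \forall T\}$. This is the standard fact that base polytopes add under Minkowski sum, and it is already used in the proof of Theorem~\ref{thm:hull}, so I will cite it as there. Alternatively, one can avoid it here by a direct decomposition. Write $x_{\cdot j}=\lambda_Mw+\sum_id_iu^{(i)}$, taking $u^{(i)}\in\Delta_{M\setminus\{i\}}$ and letting the slack $y_i-x_{ij}\ge0$ absorb the zero in coordinate $i$. This can be done by a greedy or transportation-type argument.
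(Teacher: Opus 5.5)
Your proposal is correct and follows the paper's proof: you use the same weights, $\lambda_M=1-\sum_l(1-y_l)$ and $\lambda_{M\setminus\{l\}}=1-y_l$, and the same fact that $\sum_S\lambda_S\Delta_S$ is the base polytope of $T\mapsto\sum_{S\cap T\neq\emptyset}\lambda_S$. The only difference is cosmetic: you check the upper constraints $z(T)\le F(T)$ directly, while the paper rewrites them as lower bounds $z(U)\ge\sum_{S\subset U}\lambda_S$ on complements, which is equivalent because $z(M)=1$.
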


\begin{proof}
Let $\varepsilon_l:=1-y_l\,$, so that $\sum_l\varepsilon_l\le1$. We put the
weight $\lambda_M:=1-\sum_l\varepsilon_l$ on $M$, and the weight
$\lambda_{M\setminus\{l\}}:=\varepsilon_l$ on $M\setminus\{l\}$ for each
$l\in M$. These weights are nonnegative, they sum to $1$, and they satisfy
$\sum_{S\ni i}\lambda_S=y_i$ for all $i\in M$. As in the proof of
Theorem~\ref{thm:hull}, the Minkowski sum $\sum_S\lambda_S\Delta_S$ is the base
polytope of $T\mapsto\sum_{S\cap T\ne\emptyset}\lambda_S\,$. With
$U:=M\setminus T$ and $z(U):=\sum_{i\in U}z_i\,$, it consists of the
$z\in\Delta$ with $z(U)\ge\sum_{S\subset U}\lambda_S$ for every proper subset
$U$ of $M$. The right-hand side vanishes unless $U=M\setminus\{l\}$ for some
$l$, and in that case the constraint reads $1-z_l\ge\varepsilon_l\,$, that is,
$z_l\le y_l\,$. Hence every column of $x$ lies in the Minkowski sum, and
$(x,y)\in \mathcal{H}_{m,n}\,$.
\end{proof}

For $m=2$, every point of $\mathcal{P}_{2,n}\cap \mathcal{D}_{2,n}$ satisfies $y_1+y_2\ge1$, so
Lemma~\ref{lem:hullsuff} gives $\mathcal{H}_{2,n}=\mathcal{P}_{2,n}\cap \mathcal{D}_{2,n}\,$, which also
follows from total unimodularity~\cite[Remark~2.3]{CJPR83}. For $m=3$, the
condition of Lemma~\ref{lem:hullsuff} implies the inequalities of
Theorem~\ref{thm:hull}.

\begin{proposition}\label{prop:hullclose}
Let $m\ge2$ and $n\ge1$. Then
\[
  0\le\Vol(\mathcal{P}_{m,n}\cap \mathcal{D}_{m,n})-\Vol(\mathcal{H}_{m,n})
  \le ((m-1)!)^{-n}\,e^{-n\,m^{2-m}} .
\]
Moreover, if $n>11$, then
\[
  1-\frac{\Vol(\mathcal{H}_{m,n})}{\Vol(\mathcal{P}_{m,n}\cap \mathcal{D}_{m,n})}
  \le\frac{\mu(\varepsilon_1+\dots+\varepsilon_m>1)}{1-12/(n+1)}\,,
\]
where $\mu$ is the probability measure on $[0,1]^m$ under which the
coordinates $\varepsilon_l$ are independent, each with density proportional to
$(1-\varepsilon^{m-1})^n$.
\end{proposition}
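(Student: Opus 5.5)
Both bounds follow from Lemma~\ref{lem:hullsuff}, which says that every point of $\mathcal{P}_{m,n}\cap \mathcal{D}_{m,n}$ with $\sum_l y_l\ge m-1$ lies in $\mathcal{H}_{m,n}$. The lower bound $0\le\cdot$ is just $\mathcal{H}_{m,n}\subset\mathcal{P}_{m,n}\cap\mathcal{D}_{m,n}$. For the upper bounds, we bound the difference by the volume of the part of $\mathcal{P}_{m,n}\cap\mathcal{D}_{m,n}$ where $\sum_l y_l<m-1$, that is, where $\sum_l\varepsilon_l>1$ with $\varepsilon_l:=1-y_l$. As in the proof of Theorem~\ref{thm:P}, after integrating out the columns this volume is
\[
((m-1)!)^{-n}\int_{\{y\in[0,1]^m:\ \sum_l(1-y_l)>1\}} h(y)^n\,dy .
\]

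\emph{Ratio bound.} By Lemma~\ref{lem:NOD}, $h^n\le\prod_i h_i^n=\psi^m\cdot(\text{density of }\mu)$. Here $\mu$ is the product measure of Proposition~\ref{prop:Pbound}; in the variables $\varepsilon_l$, each coordinate has density proportional to $(1-\varepsilon^{m-1})^n$. Hence the missing volume is at most $((m-1)!)^{-n}\psi^m\,\mu(\sum\varepsilon_l>1)$. Dividing by $\VP$ and using the lower bound $\hat V_P\ge(1-12/(n+1))\psi^m$ from Proposition~\ref{prop:Pbound} gives the second display. We need $n>11$ so that the denominator is positive.

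\emph{Absolute bound.} We bound $h(y)$ pointwise on $\{\sum\varepsilon_l>1\}$ and use that this region has Lebesgue measure at most $1$. For $z$ uniform on $\Delta$, $h(y)=\Pr(z_l\le 1-\varepsilon_l\ \forall l)$, so $1-h(y)=\Pr(\exists l:\ z_l>1-\varepsilon_l)$. Since $\sum_l\varepsilon_l>1$, at least one of the $m$ cap constraints must bind with positive probability. I would exhibit an explicit subregion of $\Delta$ violating some cap, with volume at least $m^{2-m}$ times $\Vol(\Delta)$. Then $h\le 1-m^{2-m}\le e^{-m^{2-m}}$, so $h^n\le e^{-nm^{2-m}}$, which gives the bound.

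The main obstacle is the pointwise estimate $1-h(y)\ge m^{2-m}$, which must hold uniformly over all $\varepsilon$ with $\sum\varepsilon_l>1$. My first attempt uses the Dirichlet/exponential representation together with inclusion-exclusion, $1-h\ge\sum_l\Pr(z_l>1-\varepsilon_l)-\sum_{l<l'}\Pr(\cdots)$. This gives $\sum_l\varepsilon_l^{m-1}$ minus pair terms. By the power-mean inequality, $\sum_l\varepsilon_l^{m-1}\ge m\cdot m^{-(m-1)}=m^{2-m}$ when $\sum\varepsilon_l\ge1$.

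The difficulty is the pair terms, which do not vanish in general: $z_l+z_{l'}>2-\varepsilon_l-\varepsilon_{l'}$ is possible when $\varepsilon_l+\varepsilon_{l'}>1$. In that case I would instead argue directly that the events are disjoint enough. Alternatively, I would use monotonicity: decrease the $\varepsilon_l$ to a point $\varepsilon'$ with $\sum\varepsilon'_l=1$, which only increases $h$. On the hyperplane $\sum\varepsilon'_l=1$, the events $\{z_l>1-\varepsilon'_l\}$ are pairwise disjoint, since two of them together would force $z_l+z_{l'}>2-\varepsilon'_l-\varepsilon'_{l'}\ge1$. There $1-h=\sum_l(\varepsilon'_l)^{m-1}\ge m^{2-m}$ exactly by convexity.
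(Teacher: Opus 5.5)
Your proof is correct. The lower bound and the ratio bound follow the paper exactly: Lemma~\ref{lem:hullsuff} confines the missing volume to $\{\sum_l\varepsilon_l>1\}$, Lemma~\ref{lem:NOD} replaces $h^n$ by $\psi^m$ times the density of $\mu$, and Proposition~\ref{prop:Pbound} controls the denominator.

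For the absolute bound you take a different route. The paper reuses the same NOD-dominated integrand: it bounds $\prod_l(1-\varepsilon_l^{m-1})^n\le e^{-n\sum_l\varepsilon_l^{m-1}}$ and applies the power-mean inequality $\sum_l\varepsilon_l^{m-1}\ge m^{2-m}(\sum_l\varepsilon_l)^{m-1}\ge m^{2-m}$ at the point itself. In this way one integral estimate serves both statements.

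You instead bound $h$ pointwise and avoid negative orthant dependence entirely. Monotonicity of $h$ in $y$ lets you replace $\varepsilon$ by $\varepsilon'\le\varepsilon$ on the hyperplane $\sum_l\varepsilon'_l=1$. There the cap-violation events are pairwise disjoint, so $1-h=\sum_l(\varepsilon'_l)^{m-1}$ exactly; equivalently, only the terms with $|S|\le1$ in \eqref{eq:f} survive. Convexity then finishes the argument. Your route is more elementary and self-contained, and it identifies exactly where the slack sits. The paper's route is shorter given Lemma~\ref{lem:NOD}, and it treats both bounds uniformly.

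Two small points on the write-up. The inclusion--exclusion attempt with pair terms is unnecessary and can be dropped, since the monotonicity argument carries the proof alone. You should also say explicitly that one may take $\varepsilon':=\varepsilon/\sum_l\varepsilon_l$, which lies in $[0,1]^m$ and is componentwise at most $\varepsilon$.
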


\begin{proof}
By Lemma~\ref{lem:hullsuff}, every point of $\mathcal{P}_{m,n}\cap \mathcal{D}_{m,n}$ outside
$\mathcal{H}_{m,n}$ satisfies $\sum_l\varepsilon_l>1$, where $\varepsilon_l:=1-y_l\,$.
For fixed $y$, the $x$-fiber of $\mathcal{P}_{m,n}\cap \mathcal{D}_{m,n}$ has volume $f_m(y)^n$,
by the proof of Theorem~\ref{thm:P}, and $(m-1)!\,f_m(y)=h(y)$, with $h$ and
$h_l$ as in Section~\ref{sec:uniform}. Hence
\begin{multline*}
  ((m-1)!)^n\mleft(\Vol(\mathcal{P}_{m,n}\cap \mathcal{D}_{m,n})-\Vol(\mathcal{H}_{m,n})\mright)\\
  \le\int_{\sum_l\varepsilon_l>1}h(y)^n\,d\varepsilon
  \le\int_{\sum_l\varepsilon_l>1}\prod_lh_l(y_l)^n\,d\varepsilon,
\end{multline*}
where the integrals are over $[0,1]^m$ and the second inequality is
Lemma~\ref{lem:NOD}. Because $h_l(y_l)=1-\varepsilon_l^{m-1}$, the
last integral equals $\psi(n,m)^m\,\mu(\varepsilon_1+\dots+\varepsilon_m>1)$,
and the second statement follows from Proposition~\ref{prop:Pbound}. For the
first statement, we use
$\prod_l(1-\varepsilon_l^{m-1})^n\le e^{-n\sum_l\varepsilon_l^{m-1}}$ together
with the bound
$\sum_l\varepsilon_l^{m-1}\ge m^{2-m}(\sum_l\varepsilon_l)^{m-1}\ge m^{2-m}$
on the region of integration, which follows from the convexity of
$t\mapsto t^{m-1}$; the region has volume at most $1$.
\end{proof}

\begin{corollary}\label{cor:hullclose}
The following hold.
\begin{enumerate}
\item For fixed $m\ge2$,
$1-\Vol(\mathcal{H}_{m,n})/\Vol(\mathcal{P}_{m,n}\cap \mathcal{D}_{m,n})
=O\mleft(n^{m/(m-1)}e^{-n\,m^{2-m}}\mright)$. In particular, for $m=3$, we have
$0\le\Vol(\mathcal{P}_{3,n}\cap \mathcal{D}_{3,n})-\Vol(\mathcal{H}_{3,n})\le2^{-n}e^{-n/3}$, and the relative
difference is $O\mleft(n^{3/2}e^{-n/3}\mright)$; Proposition~\ref{prop:m3sharp}
sharpens this to the exact rate.
\item If $m,n\to\infty$ with $\frac{\ln n}{m-1}-\ln m\to\infty$, then
$\Vol(\mathcal{H}_{m,n})\sim\Vol(\mathcal{P}_{m,n}\cap \mathcal{D}_{m,n})$.
\end{enumerate}
\end{corollary}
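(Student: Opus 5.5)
Both parts will come from Proposition~\ref{prop:hullclose}, combined with the two-sided bounds of Theorem~\ref{thm:uniform}.

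For part~(1), fix $m$ and divide the absolute bound of Proposition~\ref{prop:hullclose} by $\VP$. Theorem~\ref{thm:uniform} gives $((m-1)!)^n\VP\sim\psi(n,m)^m$. From \eqref{eq:psi}, $\psi=\Gamma(1+k)\Gamma(n+1)/\Gamma(n+1+k)\sim\Gamma(1+k)\,n^{-k}$, so $\psi^m\asymp n^{-mk}=n^{-m/(m-1)}$. Hence
\[
1-\Vol(\mathcal{H}_{m,n})/\VP\le e^{-nm^{2-m}}/\hat V_P=O\bigl(n^{m/(m-1)}e^{-nm^{2-m}}\bigr).
\]
For $m=3$, $m^{2-m}=1/3$ and $((m-1)!)^{-n}=2^{-n}$, which gives the displayed absolute bound; the relative exponent is $3/2$. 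If a bound valid for all $n$ is wanted, use the lower bound in \eqref{eq:bounds} for $n>11$ and finiteness for small $n$.

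For part~(2), use the second inequality of Proposition~\ref{prop:hullclose}. Since $n\to\infty$, it suffices to show $\mu(\sum_l\varepsilon_l>1)\to0$. By the union bound it is enough to have $m\,\mu(\varepsilon_1>1/m)\to0$, because $\sum_l\varepsilon_l>1$ forces some $\varepsilon_l>1/m$. Under $\mu$ the density of $\varepsilon$ is $(1-\varepsilon^{m-1})^n/\psi$. So
\[
\mu(\varepsilon>1/m)\le \exp\bigl(-n m^{-(m-1)}\bigr)/\psi .
\]
To lower-bound $\psi$, restrict the integral to $[0,a]$ with $a=n^{-1/(m-1)}$, as in the proof of Theorem~\ref{thm:transition}, giving $\psi\ge a/4$. (Alternatively, use $\Gamma(1+k)\ge0.88$ and $\Gamma(n+1+k)/\Gamma(n+1)\le (n+1)^k$.) Then
\[
\ln\bigl(m\,\mu(\varepsilon>1/m)\bigr)\le \ln m+\ln 4+\tfrac{\ln n}{m-1}-n m^{-(m-1)} .
\]
Write $L:=\frac{\ln n}{m-1}-\ln m\to\infty$. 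Then $n m^{-(m-1)}=e^{(m-1)L}$, which dominates $\ln m+\frac{\ln n}{m-1}=\ln m+L+\ln m$. Indeed $e^{(m-1)L}\ge e^{L}$ grows faster than $L$, and $\ln m\le (m-1)L/2$ eventually since $L\to\infty$. So the bound tends to $0$.

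The main obstacle is this last comparison. The hypothesis couples $m$ and $n$, so I must check that $e^{(m-1)L}$ beats $2\ln m+L$ uniformly, including when $m$ grows fast. This holds because $(m-1)L\ge L+(m-2)L$ and $(m-2)L\ge 2\ln m$ once $L\ge 2$ and $m\ge4$, while bounded $m$ is trivial.
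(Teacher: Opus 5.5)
Your proof is correct. Part~(1) is the paper's argument; the paper divides by the lower bound of Proposition~\ref{prop:Pbound} rather than quoting $\hat V_P\sim\psi^m$, which comes to the same thing. For part~(2) you take a genuinely different route.

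The paper applies Markov's inequality, $\mu(\varepsilon_1+\dots+\varepsilon_m>1)\le m\,\E_\mu\varepsilon_1$. It computes $\E_\mu\varepsilon_1$ exactly from the $\mathrm{Beta}(k,n+1)$ law of $\varepsilon_1^{m-1}$. It then needs Gamma-function estimates (a lower bound for $\Gamma$ on $[1,2]$ and log-convexity) to get $\E_\mu\varepsilon_1\le 3n^{-k}$. This gives the bound $3e^{-L}$, with $L:=\frac{\ln n}{m-1}-\ln m$.

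You instead use three elementary ingredients:
\begin{enumerate}
\item a union bound over the events $\{\varepsilon_l>1/m\}$;
\item a pointwise bound on the density beyond $1/m$;
\item the estimate $\psi\ge n^{-1/(m-1)}/4$ already used in the proof of Theorem~\ref{thm:transition}.
\end{enumerate}
Together these give $\mu(\varepsilon_1+\dots+\varepsilon_m>1)\le 4m^2e^{L}\exp(-e^{(m-1)L})$.

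Your route avoids special-function facts and yields a bound that is doubly exponentially small in $L$. In fact it proves more than is asked. Write $Z:=n\,m^{1-m}$. Your bound equals $4m^2Z^{1/(m-1)}e^{-Z}$, which is at most $4Ze^{-Z/3}$ once $Z\ge 3\ln m$. So your argument gives the conclusion of part~(2) under the weaker hypothesis $n\ge 3m^{m-1}\ln m$ with $m\to\infty$.

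The paper's Markov route, for its part, is shorter and makes the stated hypothesis transparent. The expected total slack $m\,\E_\mu\varepsilon_1$ is of order $e^{-L}$, so $L\to\infty$ is precisely the condition that $\sum_l(1-y_l)$ is small on average under $\mu$. That is the regime in which Lemma~\ref{lem:hullsuff} applies to typical points. The cost is a decay rate that is only exponential in $L$.
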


\begin{proof}
(1) This follows from the first bound of Proposition~\ref{prop:hullclose} and
Proposition~\ref{prop:Pbound}, because $\psi(n,m)^m\asymp n^{-m/(m-1)}$ for
fixed $m$, and because $3^{2-3}=1/3$.

(2) Let $k:=1/(m-1)$. By Markov's inequality,
$\mu(\varepsilon_1+\dots+\varepsilon_m>1)\le m\,\E_\mu\varepsilon_1\,$. As in
the proof of Proposition~\ref{prop:Pbound}, $\varepsilon_1^{m-1}$ has law
$\mathrm{Beta}(k,n+1)$ under $\mu$, so
\[
  \E_\mu\varepsilon_1
  =\frac{\Gamma(2k)}{\Gamma(k)}\cdot\frac{\Gamma(n+1+k)}{\Gamma(n+1+2k)}
  \le3\,n^{-k},
\]
because $\Gamma(2k)/\Gamma(k)=\Gamma(2k+1)/(2\Gamma(k+1))\le2/(2\cdot0.88)$
for $0<k\le1$, and because the log-convexity of $\Gamma$ gives
$\Gamma(u)/\Gamma(u+k)\le(u+k)^{1-k}/u\le2u^{-k}$ for $u\ge1$. Hence
$\mu(\varepsilon_1+\dots+\varepsilon_m>1)\le3\exp\mleft(\ln m-\frac{\ln n}{m-1}\mright)\to0$,
and the claim follows from Proposition~\ref{prop:hullclose}.
\end{proof}

The condition in part~(2) of Corollary~\ref{cor:hullclose} holds, for example,
when $m\le(1-\delta)\ln n/\ln\ln n$ for a fixed $\delta>0$. It lies inside the
regime $m\ll\ln n$ of Theorem~\ref{thm:transition}, where the facility
variables of a typical point of $\mathcal{P}_{m,n}\cap \mathcal{D}_{m,n}$ are close to $1$, which
is exactly where Lemma~\ref{lem:hullsuff} applies. For $m\gg\ln n$, the
condition $\sum_ly_l\ge m-1$ fails for typical points, and we do not know
whether $\mathcal{H}_{m,n}$ remains volumetrically close to $\mathcal{P}_{m,n}\cap \mathcal{D}_{m,n}\,$.

\begin{figure*}[t]
\centering
\includegraphics[width=\textwidth]{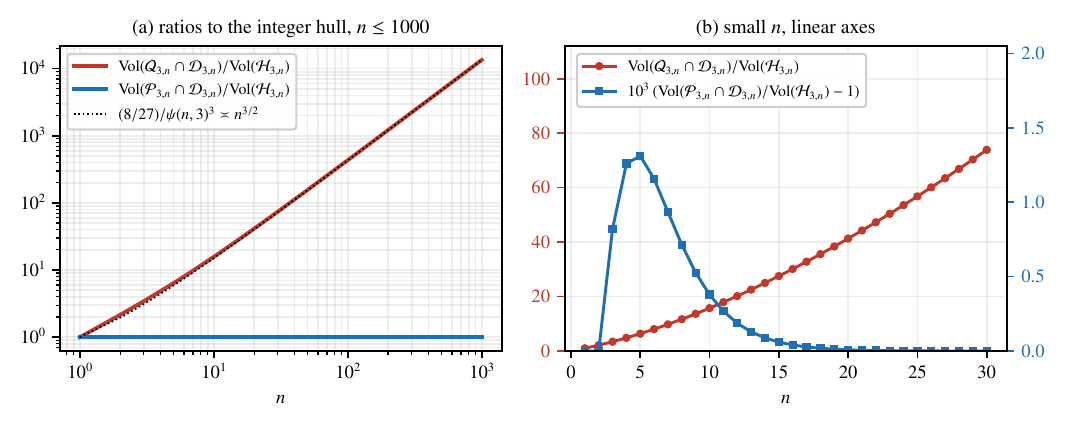}
\caption{Three facilities: the weak and the strong relaxation, compared with
the integer hull $\mathcal{H}_{3,n}\,$.}
\label{fig:m3}
\end{figure*}

We now give exact volumes. Let $S_d:=\{v\in\R^d_{\ge0}:\sum_iv_i\le1\}$, with
Lebesgue measure; let $\psi_n:=\psi(n,3)=2^n\,n!/(2n+1)!!$; and let
$(n)_3:=n(n-1)(n-2)$.

\begin{theorem}[Volumes for three facilities]\label{thm:m3vol}
For all $n\ge1$,
\begin{multline*}
  2^n\Vol(\mathcal{H}_{3,n})=-\frac{8}{(n+1)(2n+1)(2n+3)}+\frac{15\,\psi_n}{2(n+1)(2n+3)}\\
  +\frac{8\,(n)_3\,J_n}{(2n+1)(2n+2)(2n+3)}
  +\frac{24\,(n)_3\,K_n+12\,n(n-1)\,K'_n}{n(2n+1)(2n+2)(2n+3)}
\end{multline*}
and
\[
  2^n\mleft(\Vol(\mathcal{P}_{3,n}\cap \mathcal{D}_{3,n})-\Vol(\mathcal{H}_{3,n})\mright)
  =\frac{4\,(n)_3\,G_n}{(2n+1)(2n+2)(2n+3)}\,,
\]
where the terms with the factor $(n)_3$ are absent for $n\le2$ and
\begin{align*}
  J_n&:=\int_{S_3}\prod_{l\in M}\mleft(1-\sum_{i\ne l}v_i\mright)(1-|v|^2)^{n-3}\,dv,\\
  K_n&:=\int_{S_2}(1-v_1)(1-v_2)(1-v_1-v_2)(1-|v|^2)^{n-3}\,dv,\\
  K'_n&:=\int_{S_2}(1-v_1-v_2)(1-|v|^2)^{n-2}\,dv,\\
  G_n&:=\int_{S_3}w_1w_2w_3\,\phi(w)^{n-3}\,dw,
\end{align*}
with $\phi(w):=1-\frac34(1-W)^2-(1-W)W-|w|^2$ and $W:=w_1+w_2+w_3\,$.
\end{theorem}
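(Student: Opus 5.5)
We prove the second identity first and then obtain the first from it. Theorem~\ref{thm:P} gives $2^n\Vol(\mathcal{P}_{3,n}\cap\mathcal{D}_{3,n})=\int_{[0,1]^3}h(y)^n\,dy$. Theorem~\ref{thm:hull} says that $\mathcal{H}_{3,n}$ is cut out of the strong relaxation by the single condition $m_1+m_2+m_3+y_1+y_2+y_3\ge2$, where $m_l:=\min_jx_{lj}$. So the difference of volumes is the probability, over columns i.i.d.\ uniform on $\Delta$ and $y$ uniform on $[0,1]^3$, of the event that $x_{lj}\le y_l$ for all $l,j$ and $\sum_l m_l<2-\sum_l y_l$.

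\emph{Step 1.} Write $\varepsilon_l:=1-y_l$, so the condition becomes $\sum_l m_l<\sum_l\varepsilon_l-1$. Fix which column attains each row minimum. When the three minimizing columns are distinct, there are $(n)_3$ ordered choices. If two of the minima fell in the same column, the excluded inequality would have $a=b$, and Theorem~\ref{thm:hull} shows this is implied; so those configurations contribute nothing. This accounts for the factor $(n)_3$ and for the vanishing when $n\le2$.

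\emph{Step 2.} Condition on $m=(m_1,m_2,m_3)$ and on $\varepsilon$. Each of the remaining $n-3$ columns lies in the region $\{z\in\Delta: m_l\le z_l\le 1-\varepsilon_l\}$. Substituting $z_l=m_l+u_l$ turns this into a scaled simplex with box caps, whose area is given by Lemma~\ref{lem:slice}. On the relevant region the area is a quadratic, and after normalization it becomes $\phi(w)$ for suitable affine coordinates $w$. Each minimizing column $a$ for row $l$ has $z_l=m_l$ fixed and the other two coordinates free on a segment. Its length is linear in the variables and yields the factors $w_1w_2w_3$.

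\emph{Step 3.} Integrate out the remaining overall scale and translation parameters, $\sum\varepsilon_l$ and $\sum m_l$, in closed form. This should produce the rational prefactor $4/((2n+1)(2n+2)(2n+3))$. The structure of Step~2 suggests the following choice of variables: put $w_l$ equal to the slack $\varepsilon_l$ minus the amount $\sum_{i\ne l}m_i$ used, suitably halved. Then $W=\sum w_l$, and the quadratic becomes $1-\frac34(1-W)^2-(1-W)W-|w|^2$. I would verify this by direct expansion, in the spirit of the proof of Lemma~\ref{lem:slice}.

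\emph{Step 4.} For the formula for $\Vol(\mathcal{H}_{3,n})$, subtract the result above from $2^n\VP=\int h^n$ and evaluate $\int h^n$ in the same coordinates. Split by which inclusion--exclusion terms of $f_3$ are active, i.e.\ whether $\varepsilon_i+\varepsilon_l\le1$ for each pair; for $m=3$ this is a finite number of polyhedral cases. Two pieces come out in closed form: the full-box term with all caps inactive gives the $\psi_n$ term, and the boundary correction gives the term $-8/((n+1)(2n+1)(2n+3))$. The $(n)_3$-weighted $J_n$ and $K_n$ terms, and the $n(n-1)$-weighted $K'_n$ term, arise from the same min-location decomposition as in Step~1. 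Here we also keep configurations with two coinciding minimizing columns, or with a single one; these give the $K'_n$ and $K_n$ types with $S_2$ integrals.

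The main obstacle is Step~2: making the change of variables simultaneously linearize the segment lengths and turn the cross-section area into the single quadratic $\phi$, with the correct integration domain $S_3$. I would first work it out for $n=3$, where $\phi^{n-3}=1$, and check it against the direct volumes from Theorem~\ref{thm:P} before doing the general algebra.
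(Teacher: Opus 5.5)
Your Step~1 is correct and matches how the paper begins. It decomposes by the columns attaining the row minima and discards coinciding minimizers because the corresponding inequality of Theorem~\ref{thm:hull} is implied. The gap is that Steps~2--3, which carry all of the content, are left open, and the concrete choices you propose would not work.

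What makes the integral separate is the pair $r:=1-\sum_lm_l$ and $c:=y-m$. In these variables the violated inequality reads $\sum_lc_l<2r$. A minimizer of row $l$ can exist only if $c_{l'}+c_{l''}\ge r$ (where $\{l,l',l''\}=M$), and together these force $c_l\le r$ for all $l$. Hence the caps $m_l\le1-c_l$ are inactive, and the integral over the position of $m$ in $\{m\ge0,\ \sum_lm_l=1-r\}$ is just $(1-r)^2/2$. That two-dimensional position and $r$ are what get integrated out; you need three dimensions to pass from the six variables $(m,\varepsilon)$ to the three of $G_n$. It is not $\sum_l\varepsilon_l$, which on this region equals $2-r(1+W)/2$ and so is one of the $G_n$ directions.

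The segment lengths are $c_{l'}+c_{l''}-r$ and the cross-section is $\frac12\bigl(r^2-\sum_l(r-c_l)^2\bigr)$. After $c=r(\mathbf{1}-v)$ the scale separates as $\int_0^1r^{2n}(1-r)^2\,dr=2/((2n+1)(2n+2)(2n+3))$. Only then does $w_l:=1-\sum_{i\ne l}v_i$ produce the domain $S_3$, the factor $w_1w_2w_3$, and $1-|v|^2=\phi(w)$; its Jacobian is $2$, which must be tracked in the constant. Your candidate $\varepsilon_l-\sum_{i\ne l}m_i$ is exactly $r-c_l=rv_l$, the $v$-coordinate rather than $w$. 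In it, the segment for row $l$ has length $r-u_{l'}-u_{l''}$, which is not a multiple of a coordinate. The domain is not $S_3$, and without dividing by $r$ the scale does not split off.

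Step~4 does not reach the stated formula. Splitting $\int h^n$ by the polynomial pieces of $f_3$ in $\varepsilon$ gives integrals of $n$th powers of quadratics over polyhedra. Nothing there generates the weights $(n)_3$ and $n(n-1)$ or the exponents $n-3$ and $n-2$, so you would still need identities between unrelated integral representations. Your attributions also conflict with the sizes of the terms. The piece where no pairwise term of $f_3$ is active contains a neighborhood of $\varepsilon=0$ and carries essentially all of $\int h^n\sim\psi_n^3\asymp n^{-3/2}$, whereas the $\psi_n$-term of the formula is $O(n^{-5/2})$.

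The paper instead runs the min-location decomposition on $\mathcal{H}_{3,n}$ itself, i.e.\ the region $\sum_lc_l\ge2r$. This includes the $3n(n-1)$ configurations in which one column minimizes two rows, which forces the third $c_l\ge r$. It then splits by how many $c_l$ lie below $r$:
\begin{enumerate}
\item None or one below gives Beta integrals and the first two terms; the $\psi_n$ comes from $\int_0^1(\cdot)(1-v^2)^p\,dv$ in the one-below pieces.
\item Two below gives $K_n$ for distinct minimizers and $K'_n$ for a shared column. There the remaining cap $m_l\le1-c_l$ can be active, the $m$-area integrates over $c_l\in[r,1]$ to $\frac13(1-r)^3$, and the scale gives $B(2n,4)$.
\item Three below gives $J_n$ with $B(2n+1,3)$.
\end{enumerate}
In particular, $K_n$, weighted by $(n)_3$, comes from three distinct minimizers. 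It does not come from configurations with a single minimizing column, which force all columns to coincide and are null for $n\ge2$.
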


\begin{proof}[Sketch of proof]
We integrate out $y$ using Theorem~\ref{thm:hull}, and we decompose according
to which columns attain the row minima $m_l\,$. With $r:=1-\sum_lm_l$ and
$c:=y-m$, this expresses $2^n\Vol(\mathcal{H}_{3,n})$ as an integral over $(r,c)\in[0,1]^4$
of a piecewise-polynomial integrand, restricted to $\sum_lc_l\ge2r$; dropping
the restriction gives $2^n\Vol(\mathcal{P}_{3,n}\cap \mathcal{D}_{3,n})$. The integrand has
$15$ polynomial pieces. The pieces in which all, or all but one, of the $c_l$
exceed $r$ reduce to Beta integrals and give the first two terms. After the
substitutions $u_l:=r-c_l$ and $u=rv$, the remaining pieces separate into a
Beta integral in $r$ and one of $J_n\,$, $K_n\,$, or $K'_n\,$. The pieces
outside $\sum_lc_l\ge2r$ give $G_n$ after the unimodular change of variables
$w_l:=1-\sum_{i\ne l}v_i\,$. The supplementary material contains the details,
together with independent checks: qhull for $\Vol(\mathcal{H}_{3,n})$ with $n\le3$, and
Theorem~\ref{thm:P} for $\Vol(\mathcal{P}_{3,n}\cap \mathcal{D}_{3,n})$ with $n\le4$.
\end{proof}

Each of $J_n\,$, $K_n\,$, $K'_n\,$, and $G_n$ is a finite sum of Dirichlet
moments, so the volumes are easy to compute exactly. For $n=1,\dots,5$, the
values of $2^n\Vol(\mathcal{H}_{3,n})$ are $7/30$, $13/105$, $61/756$, $3026/51975$, and
$1207/27027$, and those of $2^n\Vol(\mathcal{P}_{3,n}\cap \mathcal{D}_{3,n})$ are $7/30$,
$13/105$, $407/5040$, $1469/25200$, and $36833/823680$. Because $\mathcal{H}_{3,n}$ is a
lattice polytope, $(2n+3)!\,\Vol(\mathcal{H}_{3,n})$ is an integer; its first values are
$14$, $156$, $3660$, $145248$, and $8690400$.

For $m=3$, the second formula of Theorem~\ref{thm:m3vol} gives the exact rate at
which the hull and the strong relaxation approach each other.

\begin{proposition}\label{prop:m3sharp}
As $n\to\infty$,
\[
  \Vol(\mathcal{P}_{3,n}\cap \mathcal{D}_{3,n})-\Vol(\mathcal{H}_{3,n})
  \sim\frac{\pi}{12\sqrt3}\,\frac{1}{2^n\,n^2}\mleft(\frac23\mright)^{\!n},
\]
and hence
\[
  1-\frac{\Vol(\mathcal{H}_{3,n})}{\Vol(\mathcal{P}_{3,n}\cap \mathcal{D}_{3,n})}
  \sim\frac{2}{3\sqrt{3\pi}}\,\frac{1}{\sqrt n}\mleft(\frac23\mright)^{\!n}.
\]
\end{proposition}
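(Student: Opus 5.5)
The plan is to start from the second formula of Theorem~\ref{thm:m3vol}, which reduces the first claim to a Laplace-type asymptotic for $G_n$. The prefactor satisfies $4(n)_3/((2n+1)(2n+2)(2n+3))\to\tfrac12$. Hence it suffices to show
\[
G_n\sim\frac{\pi}{6\sqrt3}\,\frac{1}{n^2}\Bigl(\frac23\Bigr)^{n}.
\]
The second claim then follows by dividing by $2^n\Vol(\mathcal{P}_{3,n}\cap\mathcal{D}_{3,n})\sim\psi_n^3$ (Theorem~\ref{thm:uniform}). By \eqref{eq:psi} with $k=\tfrac12$, $\psi_n=\Gamma(3/2)\Gamma(n+1)/\Gamma(n+3/2)\sim\tfrac{\sqrt\pi}{2}n^{-1/2}$, so $\psi_n^3\sim\pi^{3/2}n^{-3/2}/8$. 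The quotient is
\[
\frac{8}{12\sqrt3\sqrt\pi}\,n^{-1/2}=\frac{2}{3\sqrt{3\pi}}\,n^{-1/2},
\]
times $(2/3)^n$, as claimed.

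\emph{Locating the maximum of $\phi$ on $S_3$.} Write $w=\frac W3(1,1,1)+u$ with $\sum_l u_l=0$. Then $|w|^2=W^2/3+|u|^2$, so
\[
\phi=\tfrac14+\tfrac W2-\tfrac{W^2}{12}-|u|^2 .
\]
The radial part is increasing on $[0,1]$, with derivative $\tfrac12-\tfrac W6\ge\tfrac13$. Hence the unique maximizer is the boundary point $w^*=(\tfrac13,\tfrac13,\tfrac13)$ on the face $W=1$, where $\phi(w^*)=\tfrac23$. The decay is linear in $1-W$, with slope $\tfrac13$, and Gaussian in the two tangential directions $u$. Away from a neighbourhood of $w^*$, $\phi\le\tfrac23-\delta$, and this part contributes $O((\tfrac23-\delta)^n)$, which is negligible. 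One also checks that $\phi\ge0$ on $S_3$, so that the power $\phi^{n-3}$ causes no sign trouble.

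\emph{Laplace expansion near $w^*$.} Set $s:=1-W$ and $\phi=\tfrac23\bigl(1-\tfrac32(\tfrac s3+\tfrac{s^2}{12}+|u|^2)\bigr)$, so that
\[
\phi^{n-3}\approx(\tfrac23)^{n-3}e^{-ns/2}e^{-3n|u|^2/2}.
\]
The weight satisfies $w_1w_2w_3\to\tfrac1{27}$. The integral in $s$ over $[0,\infty)$ gives $2/n$. The $u$-integral runs over the plane $\sum u_l=0$, and the volume element matters here. Parametrize $S_3$ by $(W,u_1,u_2)$; the map $(w_1,w_2,w_3)\mapsto(W,u_1,u_2)$ has $|\det|=1$, so $dw=dW\,du_1\,du_2$. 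In these coordinates $|u|^2=u_1^2+u_2^2+(u_1+u_2)^2$, a quadratic form with matrix $\begin{pmatrix}2&1\\1&2\end{pmatrix}$ of determinant $3$. Therefore
\[
\int e^{-\frac{3n}{2}|u|^2}\,du_1\,du_2=\frac{2\pi}{3n\sqrt3}.
\]
Multiplying gives
\[
G_n\sim\Bigl(\tfrac23\Bigr)^{n-3}\cdot\tfrac1{27}\cdot\tfrac2n\cdot\tfrac{2\pi}{3\sqrt3\,n}
=\Bigl(\tfrac23\Bigr)^{n}\cdot\tfrac{27}{8}\cdot\tfrac{4\pi}{81\sqrt3\,n^2}
=\tfrac{\pi}{6\sqrt3}\,\tfrac{1}{n^2}\Bigl(\tfrac23\Bigr)^{n},
\]
which is the required constant.

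\emph{Main obstacle.} The main difficulty is making the boundary Laplace step rigorous. The maximum sits on a face of the domain, and the weight is smooth and nonvanishing there, so this is the standard half-space version. I would restrict to a ball of radius $n^{-2/5}$ around $w^*$, where the cubic error terms are $o(1)$ in the exponent. On that ball I would sandwich $\ln\phi$ between the quadratic model $\pm C(s^2+|u|^3+s|u|)$ and apply dominated convergence after the rescaling $s=\sigma/n$, $u=v/\sqrt n$. Outside the ball I would use the strict bound $\phi\le\tfrac23-c\,n^{-4/5}$. The remaining care is bookkeeping of the Jacobian and of the exponent $n-3$ versus $n$.
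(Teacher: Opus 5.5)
Your proposal is correct and follows the paper's proof essentially step for step. It uses the reduction to $G_n$ via Theorem~\ref{thm:m3vol}, the same decomposition $\phi=\frac23-\frac s3-\frac{s^2}{12}-|u|^2$ around the boundary maximizer $w^*$, and a Jacobian computation (unit Jacobian for $(W,u_1,u_2)$, with a quadratic form of determinant $3$) equivalent to the paper's factor $3^{-1/2}$ in orthonormal tangential coordinates.

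One small correction to your rigor sketch: on the ball of radius $n^{-2/5}$, error terms such as $ns^2$ are not uniformly $o(1)$ (they can be of order $n^{1/5}$). They are, however, $O(n^{-2/5})$ times the leading terms $\frac{ns}{2}+\frac{3n}{2}|u|^2$, so they can be absorbed, and this absorption is what supplies the dominating function for your dominated-convergence step.
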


\begin{proof}
By Theorem~\ref{thm:m3vol}, it suffices to find the asymptotics of $G_n\,$. On
$S_3\,$, the function $\phi$ attains its maximum $2/3$ at the single point
$w^*:=(\frac13,\frac13,\frac13)$, which lies on the face $W=1$. Indeed, writing
$u:=W-1\le0$ and splitting $w-w^*$ into its component along $(1,1,1)$ and a
component $\tau$ orthogonal to that direction, we have
\[
  \phi(w)=\frac23+\frac u3-\frac{u^2}{12}-|\tau|^2 .
\]
Hence
\[
  \phi(w)^{n-3}=\mleft(\frac23\mright)^{\!n-3}
  \exp\mleft((n-3)\ln\mleft(1+\frac u2-\frac32|\tau|^2+O(u^2)\mright)\mright),
\]
and Laplace's method applies, with the prefactor $w_1w_2w_3\to\frac1{27}$ and
with $dw=3^{-1/2}\,du\,d\tau$. Using $\int_{-\infty}^0e^{nu/2}\,du=2/n$ and
$\int_{\R^2}e^{-3n|\tau|^2/2}\,d\tau=2\pi/(3n)$, we obtain
\[
  G_n\sim\frac{1}{27}\mleft(\frac23\mright)^{\!n-3}\frac{1}{\sqrt3}\cdot\frac2n\cdot\frac{2\pi}{3n}
  =\frac{\pi}{6\sqrt3}\,\frac{1}{n^2}\mleft(\frac23\mright)^{\!n}.
\]
The first claim follows, because $4(n)_3/((2n+1)(2n+2)(2n+3))\to\frac12$. The
second follows from Theorem~\ref{thm:uniform}, because
$2^n\Vol(\mathcal{P}_{3,n}\cap \mathcal{D}_{3,n})\sim\psi(n,3)^3\sim(\pi^{3/2}/8)\,n^{-3/2}$.
\end{proof}

\begin{theorem}[Recurrences]\label{thm:m3rec}
The sequence $a_n:=2^n\Vol(\mathcal{H}_{3,n})$ satisfies, for all $n\ge1$,
\begin{multline*}
  -4(n+1)^2(n+2)(2n+1)(n^2-n-7)\,a_n\\
  +2(n+2)^2(2n+5)(9n^3-90n-77)\,a_{n+1}\\
  -(n+3)(2n+5)(2n+7)(13n^3+2n^2-164n-169)\,a_{n+2}\\
  +3(n+4)(2n+5)(2n+7)(2n+9)(n^2-3n-5)\,a_{n+3}=0,
\end{multline*}
and the sequence $b_n:=2^n\Vol(\mathcal{P}_{3,n}\cap \mathcal{D}_{3,n})$ satisfies, for all
$n\ge1$,
\begin{multline*}
  4(n+1)^2(n+2)(2n+1)(2n+3)\,b_n\\
  -20(n+2)^2(2n+3)(2n+5)^2\,b_{n+1}\\
  +5(n+3)(2n+5)^2(2n+7)(14n+43)\,b_{n+2}\\
  -2(n+4)(2n+5)(2n+7)(2n+9)(50n+181)\,b_{n+3}\\
  +24(n+5)(2n+5)(2n+7)(2n+9)(2n+11)\,b_{n+4}=0.
\end{multline*}
\end{theorem}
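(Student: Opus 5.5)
We will prove both recurrences by creative telescoping (the Almkvist--Zeilberger / Chyzak method for holonomic integrals), applied to the integral representations already in hand, and then certify the resulting operators on finitely many initial values. For $b_n$, Theorem~\ref{thm:P} gives $b_n=2^n\VP=\int_{[0,1]^3}h(y)^n\,dy$ with $h=2f_3$ piecewise polynomial. Rather than work with the pieces of $h$, we use the reduction behind Theorem~\ref{thm:m3vol}: the sum $b_n=2^n\Vol(\mathcal{H}_{3,n})+2^n(\VP-\Vol(\mathcal{H}_{3,n}))$ is an explicit combination of $\psi_n$, rational functions of $n$, and the integrals $J_n$, $K_n$, $K'_n$, $G_n$. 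Each of these has the form $\int_{S_d}P(v)\,Q(v)^{n-c}\,dv$ with $P,Q$ polynomials, so each is a holonomic sequence in $n$. For each of them we will find a recurrence by creative telescoping. We seek an operator $L(n,E_n)$ and rational certificates $C_i$ with
\[
L\cdot\bigl(P\,Q^{n}\bigr)=\sum_i\partial_{v_i}\bigl(C_i\,P\,Q^{n}\bigr).
\]
Integrating over the simplex, the boundary contributions come from faces where $Q^n$ times a polynomial does not vanish. These contributions are themselves integrals of the same type in fewer variables, so we handle them recursively; this produces an inhomogeneous recurrence whose right side is annihilated by lower-order operators.

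Next we combine. Closure properties of holonomic sequences (sums, and products with hypergeometric factors such as $\psi_n$ and the rational prefactors) give an annihilating operator for $a_n$ and one for $b_n$, typically of high order. We then take the least-order right factor that annihilates the actual sequence. In practice we will guess a candidate of order $3$ for $a_n$ and order $4$ for $b_n$ with the stated polynomial degrees from sufficiently many exact values; these values are easy to compute since $J_n,\dots,G_n$ are finite sums of Dirichlet moments via \eqref{eq:dirichlet}. We then verify that the guessed operator right-divides the proven annihilator. Alternatively, we apply the proven operator and check that the guessed one annihilates enough initial terms that the difference, which satisfies a known recurrence, must vanish. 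For $b_n$ there is a cleaner route: apply creative telescoping directly to $\int_{[0,1]^3}h^n$ on each linear region of $h$, whose pieces are polynomials of degree $2$ on polytopes, and verify the displayed order-$4$ operator the same way.

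Finally, we check the claimed range $n\ge1$. The leading coefficients $3(n+4)(2n+5)(2n+7)(2n+9)(n^2-3n-5)$ and $24(n+5)\cdots$ must be examined for nonnegative integer roots. The factor $n^2-3n-5$ has no integer roots, and the $b$-leading coefficient never vanishes for $n\ge1$. So once the recurrences hold as identities of holonomic sequences, together with the conventions that the $(n)_3$ terms are absent for $n\le2$ (which are consistent since $(n)_3=0$ there), they hold for all $n\ge1$. We will confirm this against the tabulated values $7/30,13/105,61/756,\dots$.

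The main obstacle is the creative-telescoping step for the multivariate integrals $J_n$ and $G_n$ over a $3$-simplex: the certificates are large, and the boundary terms on each face must be tracked. We will first try Koutschan's HolonomicFunctions (or Chyzak's Mgfun) with the simplex parametrized as a cube, accepting an overshoot in order and pruning afterwards. If that is too expensive, we will fall back on the guess-and-prove strategy. There, the rigorous part is only an a priori bound on the order and degree of some annihilator, for instance from the Lipshitz/Bernstein-type estimates. That bound tells us how many initial terms must be checked to confirm that the guessed operator is correct.
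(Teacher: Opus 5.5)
Your plan is essentially the paper's computer-assisted proof: the recurrences are guessed from exact values and then proved by integration-by-parts certificates on the integral families of Theorem~\ref{thm:m3vol}, with boundary fluxes treated as lower-dimensional integrals and the cases $n=1,2$ checked directly. The paper assembles the pieces more economically, reducing every family via polynomial vector fields (with no poles at nonnegative integers $n$) to one master integral at a common exponent, so that applying the guessed operator to the formula gives coefficients that cancel identically, with no closure computations. Fix one point in your first verification option, and one in your range argument. Right-divisibility of a proven annihilator by the guessed operator $L$ only shows that $L$ applied to the sequence is annihilated by the left cofactor, so you still need the initial-term check from your alternative. And the range of validity is governed by the poles of your certificates (in $n$, and in $v$ inside the simplex if you allow rational ones), not by the leading coefficient of the recurrence.
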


\begin{proof}[Computer-assisted proof]
We first found the recurrences by fitting them to exact values, and we then
proved them by integration by parts. For each integral family that occurs, we
computed a \emph{certificate}: a polynomial vector field $Q$ with
$(\operatorname{div}Q)F+(p+1)\,Q\cdot\nabla F$ equal to a prescribed
combination of integrands, where $F$ is the base of the power $F^p$. This is an
identity between polynomials, which we check exactly, and the divergence
theorem turns it into a first-order recurrence whose inhomogeneous term is the
flux of $QF^{p+1}$ through the boundary, that is, an integral of the next-lower
dimension. For $J$ and $G$, we chose the certificates so that the flux through
the face $\sum_iv_i=1$ is a constant multiple of $\int_{S_2}(2e_2)^{p+1}$, where
$e_2$ is the second elementary symmetric polynomial of $(v_1\,,v_2\,,1-v_1-v_2)$.
All two-dimensional families reduce, in this way, to one master integral each,
and all one-dimensional integrals are Beta integrals or satisfy first-order
recurrences with explicit values at the endpoints.

Applying each recurrence operator to the formulae of
Theorem~\ref{thm:m3vol} and reducing, we express the result as a combination of
the master integrals at a common exponent and of explicit terms, each a
rational function of $n$ times one of $1$,
$\sqrt\pi\,\Gamma(n)/\Gamma(n+\frac12)$,
$\sqrt\pi\,\Gamma(n)/(2^n\Gamma(n+\frac12))$, and $4^{-n}$. Every coefficient
cancels to $0$ identically. No coefficient of a certificate has a pole at a
nonnegative integer, so the argument is valid for $n\ge3$, and the cases
$n=1,2$ follow from the exact values. The supplementary file
\texttt{m3\_volumes\_supplement.ipynb} carries out all of these steps.
\end{proof}

\begin{remark}
For each recurrence, Petkov\v{s}ek's algorithm (as implemented in SymPy) finds, up
to scalar multiples, the single hypergeometric solution $\psi_n/(2n+3)$. Hence
neither volume is a finite linear combination of hypergeometric terms, and
Theorem~\ref{thm:m3vol} is, in this sense, as explicit as one can expect.
\end{remark}

We computed $\Vol(\mathcal{Q}_{3,n}\cap \mathcal{D}_{3,n})$ from Theorem~\ref{thm:Q}, and
$\Vol(\mathcal{P}_{3,n}\cap \mathcal{D}_{3,n})$ and $\Vol(\mathcal{H}_{3,n})$ by running the recurrences of
Theorem~\ref{thm:m3rec} forward in exact arithmetic from the initial values
listed there; for $n\le20$, the results agree with the formulae of
Theorem~\ref{thm:m3vol}. Figure~\ref{fig:m3} shows the three volumes for $m=3$
and $n$ up to $1000$; the dotted line in panel~(a) is the ratio predicted by
Theorem~\ref{thm:uniform}. The weak relaxation is larger than the integer hull
by a factor that grows like $n^{3/2}$, in agreement with
Theorem~\ref{thm:uniform}; at $n=1000$, the factor is about
$1.3\times10^{4}$. The strong relaxation, by contrast, is volumetrically almost
the integer hull. The two coincide for $n\le2$, their ratio is largest at
$n=5$, where the strong relaxation exceeds the hull by about $0.13$ percent,
and by $n=20$ the excess is below $10^{-5}$. So, for three facilities,
essentially all of the volume that separates the weak relaxation from the
integer hull is removed by the strong forcing constraints, and the odd-cycle
inequalities of Theorem~\ref{thm:hull} remove very little volume, although they
are needed for integrality once $n\ge3$.

\medskip
\noindent {\bf Declaration of generative AI and AI-assisted technologies in the manuscript preparation process.}
During the preparation of this work, the authors used Claude to assist in developing and and checking the work. 
The author takes full responsibility for the content of the published article.

\bibliographystyle{elsarticle-num}
\bibliography{texas_hot}

\end{document}